\documentclass[11pt]{amsart}

\usepackage{amsfonts, amssymb, amscd}
\usepackage{upgreek}

\usepackage{verbatim}
\usepackage{amssymb}
\usepackage{mathrsfs}
\usepackage{graphicx}
\usepackage{bm}
\usepackage[all]{xy}
\usepackage{tikz}
\usepackage{subcaption}
\usepackage{subfiles}
\usepackage[toc,page]{appendix}
\usepackage{mathtools}
\usepackage{comment}
\usepackage{enumerate}
\usepackage{enumitem}

\usepackage{graphicx}
\graphicspath{ {images/} }



\usepackage{hyperref}
\hypersetup{colorlinks, linkcolor={violet}, citecolor={violet}}

\newcommand{\Zz}{\mathbb{Z}}
\newcommand{\Cc}{\mathbb{C}}
\newcommand{\Pp}{\mathbb{P}}
\newcommand{\Rr}{\mathbb{R}}
\newcommand{\Qq}{\mathbb{Q}}
\newcommand{\Nn}{\mathbb{N}}

\newcommand{\ep}{\epsilon}
\newcommand{\dto}{\dashrightarrow}
\newcommand{\ti}{\tilde}

\newcommand{\spec}{\operatorname{Spec}}
\newcommand{\proj}{\operatorname{Proj}}

\newcommand{\Supp}{\operatorname{Supp}}

\newcommand{\Exc}{\operatorname{Exc}}
\newcommand{\pr}{\operatorname{pr}}

\newcommand{\vol}{\operatorname{vol}}
\newcommand{\Ivol}{\operatorname{Ivol}}
\newcommand{\codim}{\operatorname{codim}}
\newcommand{\mult}{\operatorname{mult}}

\newcommand{\Aa}{\mathcal{A}}

\newcommand{\Bb}{\mathcal{B}}
\newcommand{\Dd}{\mathcal{D}}
\newcommand{\Ff}{\mathcal{F}}
\newcommand{\Oo}{\mathcal{O}}

\newcommand{\Ii}{\mathcal{I}}
\newcommand{\Jj}{\mathcal{J}}
\newcommand{\Ee}{\mathcal{E}}

\newcommand{\Ll}{\mathcal{L}}
\newcommand{\Tt}{\mathcal{T}}

\newcommand{\Xx}{\mathcal{X}}
\newcommand{\Ss}{\mathcal{S}}
\newcommand{\Yy}{\mathcal{Y}}

\numberwithin{equation}{subsection}

\newtheorem{theorem}{Theorem}[section]
\newtheorem{lemma}[theorem]{Lemma}
\newtheorem{proposition}[theorem]{Proposition}
\newtheorem{definition}[theorem]{Definition}
\newtheorem{example}[theorem]{Example}
\newtheorem{corollary}[theorem]{Corollary}
\newtheorem{remark}[theorem]{Remark}
\newtheorem{conjecture}[theorem]{Conjecture}

\newtheorem*{claim*}{Claim}

 \usepackage{todonotes}
 

\begin{document}

\title[Boundedness of the base varieties of certain fibrations]{Boundedness of the base varieties of certain fibrations}

\subjclass[2020]{14E30, 14D20}

\begin{abstract}
It is conjectured that the base varieties of the Iitaka fibrations are bounded when the Iitaka volumes are bounded above. We confirm this conjecture for  Iitaka $\epsilon$-lc Fano type fibrations.
\end{abstract}

\author{Zhan Li}
\address{Department of Mathematics, Southern University of Science and Technology, 1088 Xueyuan Rd, Shenzhen 518055, China} \email{lizhan@sustech.edu.cn}

\maketitle

\setcounter{tocdepth}{1}
\tableofcontents

\section{Introduction}\label{sec: introduction}

Throughout this paper, we work with varieties defined over complex numbers. 

By analogy with the definition of volumes of divisors, the Iitaka volume of a $\Qq$-Cartier divisor is defined as follows.

\begin{definition}[Iitaka volume]\label{def: Iitaka vol}
Let $X$ be a normal projective variety and $D$ be a $\Qq$-Cartier divisor. When the Iitaka dimension $\kappa(D)$ of $D$ is non-negative, then the Iitaka volume of $D$ is defined to be 
\begin{equation}\label{eq: Iitaka vol}
\Ivol(D)=\Ivol(X, D)\coloneqq \limsup_{m\to\infty} \frac{\kappa(D)!h^0(X, \Oo_X(\lfloor mD \rfloor))}{m^{\kappa(D)}}.
\end{equation} When $\kappa(D)=-\infty$, then we put $\Ivol(D)=0$.
\end{definition}

We study the following problem on the boundedness of base varieties of Iitaka fibrations with fixed (or bounded) Iitaka volumes. For relevant definitions and properties of singularities of pairs, boundedness, Iitaka dimensions/fibrations, DCC/ACC property of sets, etc. see Section \ref{sec: preliminaries}.

\begin{conjecture}\label{conj: bounded of base, klt}
Let $d\in\Nn, v \in \Rr_{>0}$ be fixed numbers, and $\Ii\subset [0,1] \cap \Qq$ be a DCC set. Let $\Ss(d,v, \Ii)$ be the set of varieties $Z$ satisfying the following properties:
\begin{enumerate}
\item $(X, B)$ is klt with $\dim X =d$, and coefficients of $B$ are in $\Ii$,
\item $\Ivol(K_X+B)=v$, and
\item $f: X \dasharrow Z$ is the Iitaka fibration associated with $K_X+B$, where $$Z=\proj \oplus_{m=0}^\infty H^0(X, \Oo_X(\lfloor m(K_X+B) \rfloor)).$$
\end{enumerate}
Then $\Ss(d,v,\Ii)$ is a bounded family.
\end{conjecture}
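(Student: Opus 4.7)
The plan is to translate the conjecture into a boundedness question on $Z$ via the canonical bundle formula, and then to apply a BAB-type boundedness result for log pairs of bounded volume. After replacing $X$ by a higher birational model, I may assume the Iitaka fibration is a morphism $f\colon X\to Z$ with $Z$ agreeing with the variety in (3) up to birational equivalence; since boundedness of the base is a birational property, it suffices to work with any convenient birational model.

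First I would apply the canonical bundle formula (Kawamata, Ambro, Fujino--Mori) to write
$$K_X+B\sim_\Qq f^{*}(K_Z+B_Z+M_Z),$$
where $B_Z$ is the discriminant part and $M_Z$ the moduli part. By construction $K_Z+B_Z+M_Z$ is big with volume equal to $\Ivol(K_X+B)=v$. I would then verify that the coefficients of $B_Z$ lie in a DCC set $\Jj\subset [0,1]\cap\Qq$ depending only on $d$ and $\Ii$, using adjunction-type results for fibrations (DCC of log canonical thresholds and inversion of adjunction, due to Birkar and Hacon--Xu).

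The heart of the argument, and the main obstacle, is to control the moduli part $M_Z$. For the class actually handled in the paper -- Iitaka $\ep$-lc Fano-type fibrations -- I would invoke effective b-semiampleness and boundedness of moduli for Fano-type fibrations (work of Birkar, Filipazzi and others): after a further birational modification of $Z$, the divisor $M_Z$ becomes semiample with Cartier index bounded in terms of $d,v,\Ii,\ep$, and the singularities of $(Z,B_Z+M_Z)$ become $\ep'$-lc for some $\ep'>0$ depending only on the data. This descent of singularities -- a uniform form of Ambro's conjecture on the moduli part, available precisely in the Fano-type setting -- is where the hypotheses of the main theorem are crucial.

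With these inputs in hand, the pairs $(Z,B_Z+M_Z)$ form a family of $\ep'$-lc log pairs of dimension at most $d-1$, with DCC coefficients and big volume fixed at $v$. Applying the boundedness theorem for $\ep$-lc log pairs of bounded volume (Hacon--McKernan--Xu, with subsequent strengthenings by Birkar) then shows that this family is bounded, and hence so is the set of underlying varieties $Z$. I expect the control of $M_Z$, together with the descent of $\ep$-lc singularities from $(X,B)$ to $(Z,B_Z+M_Z)$, to be by far the most delicate step; the DCC of the discriminant and the final invocation of BAB are, by contrast, fairly standard at this point.
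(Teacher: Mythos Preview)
The statement you are addressing is a conjecture that the paper leaves open; there is no proof in the paper to compare against. The paper establishes only the special case of Theorem~\ref{thm: main result} (with the extra hypotheses that $(X,B)$ is $\ep$-lc and $B$ is big over $\eta_Z$), and your proposal drifts toward that case. Even there, however, your argument has a genuine gap: the step where you assert that for Fano-type fibrations ``$M_Z$ becomes semiample with Cartier index bounded in terms of $d,v,\Ii,\ep$'' is not a known result. The paper flags this explicitly in the introduction, remarking that even for $\dim Z=2$ ``the existence of the universal $r\in\Nn$ such that $rM_Z$ is Cartier does not necessarily hold.'' What descent of singularities \emph{does} give (via \cite{Bir18}) is that $(Z,B_Z+M_Z)$ is $\delta$-lc as a generalized pair, but the available boundedness theorems for generalized pairs (e.g.\ \cite{Fil18}) still require a bounded Cartier index on the nef part, so the circle does not close. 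Your outline is essentially the conditional argument recorded as Proposition~\ref{prop: assume effective adjunction}, which the paper states only under the assumption of the effective adjunction conjecture.

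Because this route is blocked, the paper's proof of Theorem~\ref{thm: main result} proceeds along entirely different lines: it first bounds $Z$ only birationally using \cite{BZ16}, then---this is the technical core---shows that $(X,B)$ itself is log birationally bounded via delicate volume estimates (Lemma~\ref{le: key}, Lemma~\ref{le: for key prop}, Proposition~\ref{prop: key}) that exploit the $\ep$-lc Fano-type structure together with \cite{Bir16BAB}, and finally recovers genuine boundedness of $Z$ from finiteness of ample models (\cite[Corollary~1.1.5]{BCHM10}) over the bounding family. The canonical bundle formula appears only as an auxiliary tool inside these estimates, not as the backbone of the argument.
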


When $K_X+B$ is big, that is, $\Ivol(K_X+B)=\vol(K_X+B)>0$, Conjecture \ref{conj: bounded of base, klt} is proved by \cite[Theorem 1.1]{HMX18}. For variants of this conjecture, see Section \ref{sec: further discussions}.

\medskip

Our main result is about the boundedness of the base varieties with further assumptions on singularities of $(X, B)$ and the Iitaka fibrations.

\begin{theorem}\label{thm: main result}
Let $d\in\Nn, \epsilon, v \in \Qq_{>0}$ be fixed numbers, and $\Ii\subset [0,1]\cap \Qq$ be a DCC set. Let $\Ff(d,\epsilon, v, \Ii)$ be the set of varieties $Z$ satisfying the following properties:
\begin{enumerate}
\item $(X, B)$ is $\ep$-lc with $\dim X =d$, and coefficients of $B$ are in $\Ii$,
\item $0<\Ivol(K_X+B) < v$,
\item $f: X \dasharrow Z$ is the Iitaka fibration associated with $K_X+B$, where $$Z=\proj \oplus_{m=0}^\infty H^0(X, \Oo_X(\lfloor m(K_X+B) \rfloor)), \text{~and}$$
\item $B$ is big over the generic point $\eta_Z$ of $Z$.
\end{enumerate}
Then $\Ff(d,\ep,v,\Ii)$ is a bounded family.
\end{theorem}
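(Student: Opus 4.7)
The plan is to transfer boundedness of $Z$ to boundedness of an induced generalized pair $(Z,B_Z+M_Z)$ via the canonical bundle formula, and then invoke known boundedness results for such pairs of log general type with bounded volume. After passing to a log resolution I may assume the Iitaka fibration $f:X\to Z$ is a morphism. By the defining property of the Iitaka fibration, $(K_X+B)|_{X_\eta}\sim_\Qq 0$ on the generic fiber, so $-K_{X_\eta}\sim_\Qq B_\eta$, which is big by hypothesis (4); thus $X_\eta$ is of Fano type. A suitable $(K_X+B)$-MMP over $Z$ then terminates with a relative good minimal model on which $K_X+B\sim_{\Qq,Z}0$, while preserving the $\ep$-lc condition. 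Replacing $X$ by this model, I may assume $f:(X,B)\to Z$ is an $\ep$-lc Fano type fibration in the sense of Definition \ref{def: ep-lc Fano type fibration}. The Ambro--Kawamata canonical bundle formula then produces, on a sufficiently high birational model of $Z$, a discriminant boundary $B_Z$ and a nef moduli $\Qq$-divisor $M_Z$ with
\[K_X+B\sim_\Qq f^*(K_Z+B_Z+M_Z),\]
and the Iitaka fibration property yields $\vol(K_Z+B_Z+M_Z)=\Ivol(K_X+B)\in(0,v)$, so $K_Z+B_Z+M_Z$ is big of bounded volume.

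The main technical step is to establish uniform singularity and coefficient control on the generalized pair $(Z,B_Z+M_Z)$: namely that it is generalized $\delta$-lc for some $\delta=\delta(d,\ep)>0$, and that the coefficients of $B_Z$ lie in a DCC set $\Jj=\Jj(\Ii,d)$. The first assertion is the Birkar-type result on the boundedness of singularities of the base of an $\ep$-lc Fano type fibration; the second comes from the explicit form of the discriminant in the canonical bundle formula, combined with the ACC for log canonical thresholds (HMX), which converts the DCC property of $\Ii$ into a DCC property for the coefficients of $B_Z$. This is the step that crucially uses hypothesis (4): without the bigness of $B$ over $\eta_Z$, the generic fiber need not be of Fano type and neither of the above uniform controls is known.

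Combining the previous steps, $(Z,B_Z+M_Z)$ is a generalized $\delta$-lc pair of log general type, of dimension at most $d$, with coefficients of $B_Z$ in the fixed DCC set $\Jj$ and with $\vol(K_Z+B_Z+M_Z)<v$. Boundedness of such pairs --- the generalized-pair analogue of \cite[Theorem 1.1]{HMX18} established by Birkar --- then yields that $Z$ lies in a bounded family, proving the theorem. I expect the principal obstacle to be the $\delta$-lc and DCC-coefficient control of the middle paragraph: this cannot be extracted from classical adjunction alone and requires the recent theory of generalized pairs arising from $\ep$-lc Fano type fibrations, which is where most of the modern input of the proof will reside. The other steps (reduction via MMP, the canonical bundle formula, and the final appeal to boundedness of generalized log general type pairs) are then essentially black-box applications of standard results.
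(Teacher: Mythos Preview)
Your reduction to an $\ep$-lc Fano type fibration and the control on $(Z,B_Z+M_Z)$ you describe (generalized $\delta$-lc via \cite[Theorem 1.9]{Bir18}, DCC coefficients for $B_Z$ via ACC for lc thresholds) are correct and are indeed used in the paper. The gap is the final black box: you invoke a ``generalized-pair analogue of \cite[Theorem 1.1]{HMX18} established by Birkar'' bounding generalized $\delta$-lc pairs of general type with DCC boundary and volume $<v$, but no such result is available in this generality. The paper says so explicitly in the introduction: this approach ``is only known for surfaces with fixed volumes and some extra conditions (see \cite[Theorem 1.8]{Fil18})'', and even that does not directly yield the theorem. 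What one does obtain from \cite[Theorem 1.3]{BZ16} (using that the general fibre is bounded, so $rM_{\bar Z}$ is integral for a universal $r$) is only effective \emph{birationality} of $K_Z+B_Z+M_Z$, i.e.\ birational boundedness of $Z$. Upgrading this to genuine boundedness of $Z=\proj R(X,K_X+B)$ is exactly the hard part, because the moduli part does not deform well in families and the HMX machinery for passing from birational boundedness to boundedness is built for ordinary pairs.

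The paper's route is therefore substantially different after the common first step. It uses the canonical bundle formula only to get a birationally bounded model $Z'$ of $Z$ (Step~2), then transfers the problem back to the total space: it builds an auxiliary $\tfrac{\ep}{2}$-lc pair $(X',D')$ big over $Z'$, runs an MMP to $(Y,D_Y)/Z'$, and proves a uniform bound $\vol(K_Y+D_Y+\tfrac12 G)\le N(\Ii,\ep,d,v)$ (Proposition~\ref{prop: key}, via Lemma~\ref{le: key} and an inductive slicing argument on $\dim V$). This yields log birational boundedness of $(X,B)$ as an ordinary pair. Finally (Step~5), one parametrises the $(X,B)$ by a log smooth family $(\Xx,\Bb)\to\Tt$ and applies finiteness of ample models over $\Tt$ \cite[Corollary 1.1.5]{BCHM10} together with \cite[Corollary 1.4]{HMX18} to conclude that the ample models --- precisely the varieties $Z$ --- form a bounded family. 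The volume estimate of Proposition~\ref{prop: key} and this detour through the total space are the substantive content your outline omits.
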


Let $g: X' \to X$ be a birational morphism that resolves $f$ and let $\Exc(g)$ be the sum of reduced exceptional divisors of $g$. If $A$ is a $\Qq$-Cartier ample divisor over $\eta_Z$ such that $0 \leq A \leq B$, then there exists $t>0$ such that $g_*^{-1}B+\Exc(g) \geq tg^*A$ over $\eta_Z$. Thus the assumption in (4) means that $g^{-1}_*B + \Exc(g)$ is big over $\eta_Z$.

Notice that we impose the strong assumption that $B$ is big over $\eta_Z$. This implies that a general fiber of $f$ is birationally bounded (\cite[Theorem 1.3]{HX15}, \cite[Theorem 1.1]{Bir16BAB}). However, it is desirable to obtain the boundedness of the base varieties regardless of the boundedness of fibers. 

\medskip

\cite{Jia18}, \cite{Bir18} and \cite{DCS16} studied similar fibrations (called $(d,r,\ep)$-Fano type fibrations, see Definition \ref{def: (d,r,ep)-Fano type fibration}) where the boundedness of $Z$ is built in the definition. They show the boundedness of the total space under certain assumptions.

\begin{theorem}[{\cite[Theorem 1.3]{Bir18}}]\label{thm: Birkar CY fibration}
Let $d, r$ be natural numbers and $\ep, \delta$ be positive real numbers. Consider the set of all $(d,r,\ep)$-Fano type fibrations $(X,B) \to Z$ and $\Rr$-divisors $0 \leq \Delta \leq B$ whose non-zero coefficients are $\geq \delta$. Then the set of such $(X, \Delta)$ is log bounded.
\end{theorem}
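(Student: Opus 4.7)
The plan is to decompose log boundedness of $(X, \Delta)$ into three pieces: (i) boundedness of the total space $X$, (ii) a degree bound on $B$ with respect to a fixed very ample divisor on $X$, and (iii) extraction of a bound on $\Delta$ from (ii) using the coefficient threshold $\delta$.

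For (i) I would argue as follows. The base $Z$ already lies in a bounded family by the defining property of a $(d,r,\ep)$-Fano type fibration: the parameter $r$ controls an ample divisor on $Z$, and its singularities are controlled via the canonical bundle formula applied to the $\ep$-lc pair $(X,B)$. The general fiber $F$ of $f\colon X \to Z$ is an $\ep$-lc Fano type variety of dimension $\leq d$, hence belongs to a bounded family by the BAB theorem \cite{Bir16BAB}. To promote these two bounded families into a bounded family of total spaces, I would construct a bounded $n$-complement $B^+ \geq B$ on the generic fiber via Birkar's theory of complements, spread it out to a global divisor, and combine it with the pullback of a bounded ample divisor from $Z$ to obtain an ample divisor $H$ on $X$ whose degree and top self-intersection are uniformly bounded. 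Effective very-ampleness results then embed the $X$'s into a fixed projective space.

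For (ii), once $X$ is bounded, fix such an $H$. Using $K_X + B \sim_\Rr f^* L_Z$ for some divisor $L_Z$ on $Z$ of bounded degree (via the canonical bundle formula with bounded moduli part) and the boundedness of $K_X$ in terms of $H$, we deduce that $H^{d-1}\cdot B$ is uniformly bounded. For (iii), the inequality $0 \leq \Delta \leq B$ together with the coefficient bound gives $\delta \cdot \Supp \Delta \leq \Delta \leq B$, so $H^{d-1}\cdot \Supp\Delta \leq \delta^{-1}\, H^{d-1}\cdot B$ is bounded. This bounds both the number of irreducible components of $\Delta$ and their individual degrees, which together with the boundedness of $X$ gives log boundedness of $(X,\Delta)$.

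The main obstacle is clearly step (i): relating the bounded family of bases and the bounded family of fibers to a bounded family of total spaces is not automatic. Pushing through it is precisely where Birkar's theory of bounded $n$-complements for Fano fibrations becomes indispensable; one must produce a complement $B^+$ that is simultaneously bounded in coefficients and makes $(X, B^+)$ log bounded in a strong enough sense. The remaining steps are degree bookkeeping and should be routine once boundedness of $X$ has been established.
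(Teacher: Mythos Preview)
The paper does not contain a proof of this theorem: it is quoted verbatim from \cite[Theorem 1.3]{Bir18} and used as a black box (in the proof of Corollary~\ref{cor: bounded of the whole family}). So there is no in-paper argument to compare your proposal against.

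That said, your outline is a reasonable caricature of Birkar's actual strategy in \cite{Bir18}: reduce to bounding $X$ itself, then control the degree of $B$ (hence of $\Supp\Delta$ via the $\delta$-threshold) against a uniformly chosen very ample divisor. You correctly identify step~(i) as the substantive part. Two cautions, however. First, your sentence ``the base $Z$ already lies in a bounded family by the defining property'' is slightly misleading: what is built into Definition~\ref{def: (d,r,ep)-Fano type fibration} is only the existence of a very ample $A$ on $Z$ with $A^{\dim Z}\leq r$ and $A-L$ ample, which immediately bounds $Z$ as a polarized variety---no appeal to the canonical bundle formula or to singularities of $Z$ is needed at that point. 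Second, the passage from ``bounded base plus bounded general fiber'' to ``bounded total space'' is where essentially all the work lies; producing a bounded complement on the generic fiber and spreading it out is the right idea, but making this rigorous occupies the bulk of \cite{Bir18} and involves considerably more than what you sketch (control of the moduli part, effective birationality, toroidal geometry over the base, etc.). Your steps (ii) and (iii) are indeed routine once (i) is in hand.
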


To compare Theorem \ref{thm: main result} with Theorem \ref{thm: Birkar CY fibration}, we note that Theorem \ref{thm: main result} is about the boundedness of the base variety $Z$, while Theorem \ref{thm: Birkar CY fibration} is about the boundedness of the total space $(X, \Delta)$ assuming the boundedness of the base variety $Z$. Combining the two results, we have the following corollary.

\begin{corollary}\label{cor: bounded of the whole family}
Let $d\in\Nn, \epsilon, v \in \Qq_{>0}$ be fixed numbers, and $\Ii\subset [0,1]\cap \Qq$ be a DCC set. Let $\Ss(d, \ep,v,\Ii)$ be the set of log pairs $(X, B)$ satisfying the following properties:
\begin{enumerate}
\item $(X, B)$ is $\ep$-lc with $\dim X =d$, and coefficients of $B$ are in $\Ii$,
\item $K_X+B$ is semi-ample with $\Ivol(K_X+B) < v$,
\item $f: X \to Z$ is the Iitaka fibration associated with $K_X+B$, where $$Z=\proj \oplus_{m=0}^\infty H^0(X, \Oo_X(\lfloor m(K_X+B) \rfloor)), \text{~and}$$
\item $B$ is big over the generic point $\eta_Z$ of $Z$.
\end{enumerate}
Then $\Ss(d,\ep,v,\Ii)$ is a log bounded family.
\end{corollary}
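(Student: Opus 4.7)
The plan is to combine Theorem \ref{thm: main result} (boundedness of the base) with Birkar's Theorem \ref{thm: Birkar CY fibration} (boundedness of the total space given a bounded base). Each $Z$ arising from a pair in $\Ss(d, \ep, v, \Ii)$ already satisfies the hypotheses defining the family $\Ff(d, \ep, v, \Ii)$, so by Theorem \ref{thm: main result} the collection $\{Z\}$ is bounded. This supplies a uniform constant $r = r(d, \ep, v, \Ii) \in \Nn$ such that every such $Z$ carries a very ample divisor $A_Z$ with $A_Z^{\dim Z} \leq r$, together with the standard bounded-family data.

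Next, I would verify that each $f : (X, B) \to Z$ in $\Ss(d, \ep, v, \Ii)$ is a $(d, r, \ep)$-Fano type fibration in the sense of Definition \ref{def: (d,r,ep)-Fano type fibration}. The pair is $\ep$-lc with $\dim X = d$. Since $K_X + B$ is semi-ample and $f$ is its Iitaka fibration (now an honest morphism), one has $K_X + B \sim_\Qq f^* H$ for some ample $\Qq$-divisor $H$ on $Z$; in particular $K_X + B \sim_\Qq 0$ over $Z$. Combining this with the hypothesis that $B$ is big over $\eta_Z$ gives $-K_X \sim_\Qq B$ over $Z$ big relative to $Z$, so $(X, B)$ is of Fano type over $Z$. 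Together with the numerical data coming from the bounded family of bases, this matches the $(d, r, \ep)$-Fano type fibration condition.

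Finally, since $\Ii$ is DCC, the set $\Ii \setminus \{0\}$ has a positive minimum $\delta = \delta(\Ii) > 0$, so every non-zero coefficient of $B$ is at least $\delta$. Applying Theorem \ref{thm: Birkar CY fibration} with $\Delta = B$ gives the log boundedness of $\Ss(d, \ep, v, \Ii)$. The only substantive ingredient is Theorem \ref{thm: main result}; the rest is a matching of hypotheses. The point to check carefully is the compatibility of the data produced above with the precise form of Definition \ref{def: (d,r,ep)-Fano type fibration} demanded by Birkar, in particular arranging any auxiliary numerical bound of the form ``$A - (K_X + B)$ nef on $Z$'' via a suitable bounded multiple of the ample $\Qq$-divisor $H$ obtained from the bounded family of bases.
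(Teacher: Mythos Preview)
Your overall strategy matches the paper's: bound $Z$ via Theorem \ref{thm: main result}, verify the $(d,r,\ep)$-Fano type conditions, and apply Theorem \ref{thm: Birkar CY fibration}. Conditions (1)--(4) of Definition \ref{def: (d,r,ep)-Fano type fibration} go through as you say, and the $\delta$ from the DCC set handles the hypothesis of Theorem \ref{thm: Birkar CY fibration}.

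The gap is precisely the point you flag at the end: condition (5), namely that $A - L$ is ample for a very ample $A$ with $A^{\dim Z} \leq r$, where $K_X+B \sim_\Qq f^*L$. Boundedness of $Z$ alone gives a uniform very ample $A$, but it does \emph{not} bound $L$ in terms of $A$. Even on a fixed bounded $Z$, ample $\Qq$-divisors $L$ with $\vol(L)<v$ need not satisfy $NA - L$ ample for any uniform $N$: take $Z = \Pp^1 \times \Pp^1$, $A = H_1 + H_2$, and $L = \tfrac{1}{N}H_1 + N H_2$, which has $\vol(L)=2$ but is not dominated by any fixed multiple of $A$. So ``a suitable bounded multiple'' cannot be arranged from the bound on $\Ivol$ and boundedness of $Z$ alone.

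The paper closes this gap by going back \emph{inside} the proof of Theorem \ref{thm: main result} (Step 5) rather than using its statement as a black box. There one has finitely many families of log terminal models $\Yy^i_j \to \Tt$ and ample models $\mathcal Z^i_{jk} \to \Tt$, and the relevant boundaries $\Bb'_{\Yy}$ vary in a fixed rational polytope $\bar{\mathcal P}$ with $K_\Yy + \Dd \sim_\Qq 0/\mathcal Z$ for all $\Dd \in \bar{\mathcal P}$. For each vertex $\Dd^{(s)}$ one has $K_\Yy + \Dd^{(s)} \sim_\Qq \phi^*\Ll^{(s)}$ for a fixed divisor $\Ll^{(s)}$ on the family $\mathcal Z$; choosing a single relatively very ample $\mathcal A$ on $\mathcal Z$ with $\mathcal A - \Ll^{(s)}$ ample for each vertex, convexity then gives $\mathcal A_t - L$ ample for every $L$ arising from a pair in $\Ss(d,\ep,v,\Ii)$. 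This polytope argument is the missing ingredient in your proposal.
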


The distribution of Iitaka volumes is closely related to the above boundedness property. By analogy with \cite{Ale94, Kol94, HMX14}, we propose the following conjecture.

\begin{conjecture}\label{conj: DCC}
Let $d\in\Nn$ be a fixed number, and $\Ii\subset [0,1]\cap \Qq$ be a DCC set. Then the set of Iitaka volumes
\[
\{\Ivol(K_X+B) \mid (X, B) \text{~is klt,~} \dim X =d, \text{and coefficients of~} B \text{~are in~} \Ii\}
\] is a DCC set.
\end{conjecture}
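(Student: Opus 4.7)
The plan is to reduce Conjecture \ref{conj: DCC} to the DCC property of volumes of generalized klt pairs of log general type, i.e.\ the generalized-pair analogue of the main theorem of \cite{HMX14}. Given a klt pair $(X,B)$ with $\dim X = d$ and coefficients of $B$ in $\Ii$, the first step is to pass, conditionally on the existence of good minimal models, to a good minimal model $(X', B')$ on which $K_{X'} + B'$ is semi-ample and induces a morphism $f : X' \to Z$ birational to the Iitaka fibration. By construction $K_{X'} + B' \sim_\Qq f^* L$ for some ample $\Qq$-divisor $L$ on $Z$, whence $\Ivol(K_X+B) = \vol(L)$, and $\dim Z = \kappa(K_X+B) \leq d$.

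The second step is to invoke Ambro's canonical bundle formula for $f$, writing $L \sim_\Qq K_Z + B_Z + M_Z$ with $B_Z$ the discriminant part and $M_Z$ the moduli part, so that $(Z, B_Z + M_Z)$ becomes a generalized klt pair and $\Ivol(K_X+B) = \vol(K_Z + B_Z + M_Z)$. I would then establish two inputs that depend only on $d$ and $\Ii$: (a) the coefficients of $B_Z$ lie in a DCC set $\Jj(d, \Ii)$, which follows from the ACC for log canonical thresholds and the standard analysis of the discriminant under adjunction in \cite{HMX14}; and (b) the b-Cartier index of the moduli part is controlled. Granted (a) and (b), $(Z, B_Z + M_Z)$ is a generalized klt pair of log general type with DCC coefficient data in dimension at most $d$, and the generalized-pair DCC for volumes then yields that $\vol(K_Z + B_Z + M_Z)$ lies in a DCC set depending only on $d$ and $\Ii$, completing the argument.

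The hard part is that both inputs rest on open conjectures: the existence of good minimal models for klt pairs with $\kappa \geq 0$ is open in general, and (b) is essentially the Prokhorov--Shokurov conjecture on effective b-semi-ampleness of the moduli part, which is presently known only in restricted ranges. In the setting of the main theorem of this paper, the additional hypothesis that $B$ is big over $\eta_Z$ forces the general fibers of $f$ to be bounded by \cite{HX15, Bir16BAB}, and this boundedness of fibers is precisely what tames the moduli data enough for the above scheme to run unconditionally; this suggests that an $\ep$-lc Fano-type fibration version of Conjecture \ref{conj: DCC} is accessible by the same methods as Theorem \ref{thm: main result}, while the unconditional statement in full generality remains tied to further progress on the MMP and on b-semi-ampleness.
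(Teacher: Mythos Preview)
Your outline matches the paper's conditional treatment in Proposition~\ref{prop: assume effective adjunction} almost exactly: reduce to a good minimal model, apply the canonical bundle formula, control the discriminant via ACC for lct, and control the moduli part via a Prokhorov--Shokurov type statement. The one genuine difference is in the endgame. The paper uses the full strength of effective adjunction (base point freeness of $m\mathbf M$) to replace $M_Z$ by an \emph{effective} $G_Z\sim_\Qq M_Z$ with coefficients in $\{k/m\}$, so that $(Z,\Delta_Z+G_Z)$ is an ordinary klt pair and one can quote \cite[Theorem~1.3(1)]{HMX14} directly. You instead ask only for a bound on the b-Cartier index and then appeal to a DCC-of-volumes statement for generalized pairs. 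Your version assumes a nominally weaker input on $\mathbf M$ but requires the generalized-pair analogue of \cite{HMX14} as a black box; the paper's version needs the stronger base-point-free input but stays within the classical theory. Both are, as you correctly note, conditional.

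One point to sharpen: your closing remark that the $\epsilon$-lc Fano type case ``is accessible by the same methods as Theorem~\ref{thm: main result}'' is slightly off. The paper explicitly observes (end of Section~\ref{sec: introduction}) that even in this case the canonical-bundle-formula route does not obviously go through, because a universal $r$ with $rM_Z$ Cartier is not available. Instead, Corollary~\ref{cor: DCC of Iitaka volume} is proved by a different mechanism: first use Theorem~\ref{thm: main result} together with \cite[Theorem~1.3]{Bir18} to get log boundedness of the total spaces (Corollary~\ref{cor: bounded of the whole family}), then place everything in a log smooth family and use invariance of plurigenera \cite[Theorem~4.2]{HMX18} to compare Iitaka volumes across fibers. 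So the unconditional special case is obtained not by making your scheme run, but by sidestepping the moduli-part issue entirely via boundedness.
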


When $K_X+B$ is big, this conjecture follows from \cite[Theorem 1.3 (1)]{HMX14} which was originally conjectured by Alexeev and Koll\'ar. We show Conjecture \ref{conj: DCC} under suitable assumptions on the boundedness of the general fibers of the Iitaka fibrations.

\begin{corollary}\label{cor: DCC of Iitaka volume}
Let $d\in\Nn, \epsilon \in \Qq_{>0}$ be fixed numbers, and $\Ii\subset [0,1]\cap \Qq$ be a DCC set. Let $\Dd(d,\epsilon, \Ii)$ be the set of log pairs $(X, B)$ satisfying the following properties:
\begin{enumerate}
\item $(X, B)$ is $\ep$-lc with $\dim X =d$, and coefficients of $B$ are in $\Ii$,
\item $f: X \dasharrow Z$ is the Iitaka fibration associated with $K_X+B$ (if it exists), where $$Z=\proj \oplus_{m=0}^\infty H^0(X, \Oo_X(\lfloor m(K_X+B) \rfloor)), \text{~and}$$
\item $B$ is big over the generic point $\eta_Z$ of $Z$.
\end{enumerate}
Then $\{\Ivol(K_X+B) \mid (X, B) \in \Dd(d,\ep,\Ii)\}$ is a DCC set.
\end{corollary}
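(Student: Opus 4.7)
The strategy is to argue by contradiction using Theorem \ref{thm: main result} together with the canonical bundle formula and the DCC of log volumes from \cite{HMX14}.

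Suppose to the contrary that there exist $(X_i, B_i) \in \Dd(d,\ep,\Ii)$ whose Iitaka volumes $v_i := \Ivol(K_{X_i}+B_i)$ form a strictly decreasing sequence, and set $v := v_1$. Each $v_i$ lies in $(0, v)$, so Theorem \ref{thm: main result} applies and the base varieties $Z_i$ of the Iitaka fibrations $f_i : X_i \dasharrow Z_i$ all belong to the bounded family $\Ff(d,\ep,v,\Ii)$. Passing to a subsequence, I may assume the $Z_i$ are fibers of a single family $\pi : \mathcal{Z} \to T$ of finite type over an irreducible variety $T$.

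Next I would express the $v_i$ as honest log volumes on the $Z_i$. After a common log resolution that resolves $f_i$, and passage to a relative good minimal model of $(X_i, B_i)$ over $Z_i$ (which exists by BCHM, since $B_i$ is big over $\eta_{Z_i}$ and $(X_i, B_i)$ is $\ep$-lc, so we are in the relative general type setting), I may assume $f_i : X_i \to Z_i$ is a morphism, $K_{X_i}+B_i$ is semi-ample over $Z_i$, and its restriction to the generic fiber is $\Qq$-trivial. Ambro's canonical bundle formula then provides effective $\Qq$-divisors $B_{Z_i}$ (discriminant) and $M_{Z_i}$ (moduli) on $Z_i$ with $(Z_i, B_{Z_i}+M_{Z_i})$ klt, and
\[
K_{X_i}+B_i \sim_\Qq f_i^*(K_{Z_i}+B_{Z_i}+M_{Z_i}), \qquad v_i = \vol(K_{Z_i}+B_{Z_i}+M_{Z_i}).
\]

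The third step is to check that $B_{Z_i}+M_{Z_i}$ has coefficients in a fixed DCC set. The coefficients of $B_{Z_i}$ are log canonical thresholds computed from components whose coefficients lie in the DCC set $\Ii$, so by ACC for log canonical thresholds \cite[Theorem 1.1]{HMX14} they belong to a DCC set $\Jj = \Jj(d,\ep,\Ii)$. For the moduli part, the hypothesis that $B_i$ is big over $\eta_{Z_i}$ combined with \cite[Theorem 1.1]{Bir16BAB} gives that the general fibers of $f_i$ form a birationally bounded family; effective $b$-semiampleness in this setting (as exploited in \cite{Bir18}) then bounds the Cartier index of $M_{Z_i}$ uniformly, so after replacing $M_{Z_i}$ by a $\Qq$-linearly equivalent effective divisor, its coefficients may also be taken in a DCC set. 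Applying \cite[Theorem 1.3(1)]{HMX14} to the klt pairs $(Z_i, B_{Z_i}+M_{Z_i})$, whose underlying varieties lie in the bounded family $\pi:\mathcal{Z} \to T$ and whose boundary coefficients lie in a fixed DCC set, then yields that $\{v_i\}_{i\geq 1}$ is a DCC set, contradicting the strict decreasingness.

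The main obstacle I anticipate is Step 3, namely translating the boundedness of the fibers of $f_i$ into DCC control of the moduli divisor $M_{Z_i}$. This is precisely where the hypothesis that $B$ is big over $\eta_Z$ enters essentially: it delivers boundedness of the general fibers and hence the uniform bound on the Cartier index of $M_{Z_i}$ needed for the DCC of volumes to apply.
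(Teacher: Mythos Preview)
Your approach differs from the paper's and has a genuine gap in Step 3.

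The paper does not descend to the base via the canonical bundle formula. Instead it invokes Corollary \ref{cor: bounded of the whole family} to conclude that the \emph{total pairs} $(X_i,B_i)$ (not merely the $Z_i$) lie in a log bounded family $(\Xx,\Bb)\to\Tt$. After passing to a log smooth model over $\Tt$ with irreducible strata, deformation invariance of $h^0(\Xx_t,m(K_{\Xx_t}+\Bb^i_t))$ from \cite[Theorem 4.2]{HMX18} is applied: since the coefficients lie in the DCC set $\Ii$, one may pass to a subsequence where the $\Bb^i$ are componentwise non-decreasing, which forces $\Ivol(K_{X_i}+B_i)$ to be non-decreasing and gives the contradiction. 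The boundedness of $Z_i$ from Theorem \ref{thm: main result} is not used directly here; what is needed is boundedness of $(X_i,B_i)$.

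The gap in your argument is the claim that boundedness of the general fibres yields effective $b$-semiampleness of the moduli part. What boundedness of fibres actually gives (via \cite[Claim 3.2]{HX15}, \cite{Tod10}, ultimately \cite{FM00}) is a uniform bound on the Cartier index of $\mathbf M$ on a high model; this is precisely what the paper uses in Step 2 of the proof of Theorem \ref{thm: main result}. That $m\mathbf M$ is b-\emph{free} is the effective adjunction conjecture (Conjecture \ref{conj: effective adjunction}), which remains open, and \cite{Bir18} does not establish it. Without b-semiampleness you cannot replace $M_{Z_i}$ by a $\Qq$-linearly equivalent effective divisor with coefficients in a fixed DCC set while keeping $(Z_i,B_{Z_i}+M_{Z_i})$ klt; a nef divisor of bounded index need not have any effective representative at all. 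Hence \cite[Theorem 1.3(1)]{HMX14}, which concerns honest lc pairs with effective boundary, is not available. The paper in fact records exactly your line of argument as Proposition \ref{prop: assume effective adjunction}, but explicitly conditional on Conjecture \ref{conj: effective adjunction}. (As a side remark, \cite[Theorem 1.3(1)]{HMX14} does not require the underlying varieties to be bounded, so even if Step 3 went through, your invocation of Theorem \ref{thm: main result} to bound the $Z_i$ would be unnecessary.)
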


We briefly explain the idea for the proof of Theorem \ref{thm: main result}. First, we reduce to the case where $X \to Z$ is a morphism. Then we show that $Z$ is birationally bounded. This means that $Z$ is birational to a fiber of a projective morphism $\mathcal Z' \to \Tt'$ between noetherian schemes of finite type over $\mathbb C$. Using the boundedness of a general fiber of $f$, we show that $(X, B)$ is log birational to a fiber of a family of log pairs which is built upon $\mathcal Z' \to \Tt'$. This is the most technical part of the argument. Finally, the boundedness of $Z$ follows from the finiteness of ample models (\cite[Corollary 1.1.5]{BCHM10}).

\medskip

Another possible approach is to apply the canonical bundle formula to $f: (X, B) \to Z$, then there is a generalized polarized pair $(Z, \Delta_Z+M_Z)$ on $Z$ such that $K_X+B \sim_\Qq f^*(K_Z+\Delta_Z+M_Z)$. One may expect to bound $Z$ under the boundedness of the volumes of $K_Z+\Delta_Z+M_Z$. Such result is only known for surfaces with fixed volumes and some extra conditions (see \cite[Theorem 1.8]{Fil18}). However, it seems that even for $\dim Z=2$, Theorem \ref{thm: main result} does not directly follow from \cite[Theorem 1.8]{Fil18} (the existence of the universal $r\in \Nn$ such that $rM_Z$ is Cartier does not necessarily hold. On the other hand, this is known for a sufficiently high model of $Z$). 

\medskip

The paper is organized as follows. In Section \ref{sec: preliminaries}, we introduce preliminary notation and results. Theorem \ref{thm: main result} and its corollaries are proven in Section \ref{sec: proof of the main result}. In Section \ref{sec: further discussions}, we discuss possible variants of Conjecture \ref{conj: bounded of base, klt}, the relation to the effective adjunction conjecture, and establish some lower dimensional cases. 
\medskip

\noindent\textbf{Acknowledgements}.
The author thanks Chenyang Xu and Lei Zhang for answering related questions. He also thanks Stefano Filipazzi for pointing out some inaccurate statements and providing the reference \cite[Claim 4.38.1]{Kol13}. The author thanks the anonymous referee for reading the manuscript carefully and providing constructive suggestions. This work is partially supported by a starting grant from SUSTech.

\section{Preliminaries}\label{sec: preliminaries}

\subsection{Notation and conventions}\label{subsec: Notation and conventions}
Let $\Ii \subset \Rr$ be a subset, then $\Ii$ is said to be a DCC set (resp. ACC set) if there is no strictly decreasing subsequence (resp. strictly increasing subsequence) in $\Ii$. Let $B$ be an $\Rr$-divisor, then $B \in \Ii$ denotes the fact that the coefficients of $B$ belong to $\Ii$. For a birational morphism $f: Y \to X$ and a divisor $B$ on $X$, $f_*^{-1}(B)$ denotes the strict transform of $B$ on $Y$, and $\Exc(f)$ denotes the sum of reduced exceptional divisors of $f$. A fibration means a projective and surjective morphism with connected fibers. For an $\Rr$-divisor $D$, we denote by $|D|$ the linear system $|\lfloor D \rfloor|$. For $k=\Zz, \Qq, \Rr$, and two divisors $A, B \in k$ on a variety $X$ over $Z$, $A \sim_k B/Z$ means that $A$ and $B$ are $k$-linearly equivalent over $Z$. When $k=\Zz$ or $Z=\spec \Cc$, we omit $k$ or $Z$.

\subsection{Singularities of pairs and boundedness}\label{subsec: singularities of log pairs}

Let $X$ be a normal projective variety and $B$ be an $\Rr$-divisor on $X$, then $(X, B)$ is called a log pair. We assume that $K_X+B$ is an $\Rr$-Cartier divisor for a log pair $(X, B)$. For a divisor $D$ over $X$, if $f: Y \to X$ is a birational morphism from a normal projective variety $Y$ such that $D$ is a divisor on $Y$, then the log discrepancy of $D$ with respect to $(X, B)$ is defined as $\mult_{D}(K_Y-f^*(K_X+B))+1$. This definition is independent of the choice of $Y$. A log pair $(X, B)$ is called sub-klt (resp. sub-lc) if the log discrepancy of any divisor over $X$ is $>0$ (resp. $\geq 0$). If $B \geq 0$, then a sub-klt (resp. sub-lc) pair $(X, B)$ is called klt (resp. lc). For $\ep \in \Rr_{>0}$, a log pair $(X, B)$ is called $\ep$-lc if the log discrepancy of any divisor over $X$ is $>\ep$. For a log pair $(X, B)$ over $Z$, see \cite[Definition 3.6.5]{BCHM10} for the definition of the ample model of $K_X+B$ over $Z$. This is also called the ample model of $(X, B)$ over $Z$. Let $Z \to T$ be a morphism and $B$ be a subscheme of $Z$. If $t\in T$ is a closed point, then $Z_t$ and $B_t$ denote the fiber of $Z$ and $B$ over $t$ respectively.

\begin{definition}[{\cite[Bounded pairs \S 3.5]{HMX14}}]\label{def: bir (log) bounded}\label{def: bounded pairs}
We say that a set $\mathfrak X$ of varieties is birationally bounded if there is a projective morphism $Z \to T$, where $T$ is of finite type, such that for every $X\in \mathfrak X$, there is a closed point $t\in T$ and a birational map $f: Z_t \dasharrow X$.

We say that a set $\mathfrak D$ of log pairs is log birationally bounded (resp. bounded) if there is a log pair $(Z, B)$, where the coefficients of $B$ are all one, and a projective morphism $Z\to T$, where $T$ is of finite type,  such that for every $(X, \Delta)\in \mathfrak D$, there is a closed point $t\in T$ and a birational map $f:Z_t \dasharrow X$ (resp. isomorphism of varieties) such that the support of $B_t$ is not the whole of $Z_t$ and yet $B_t$ contains the support of the strict transform of $\Delta$ and any $f$-exceptional divisor (resp. $f(B_t) = \Supp\Delta$).
\end{definition}

\subsection{Iitaka fibrations and Iitaka volumes}

For this part, we follow the exposition in \cite[\S 2.1]{Laz04I}. Let $X$ be a normal projective variety and $D$ be a $\Qq$-Cartier divisor. Let $${\mathbf N}(X, D) \coloneqq \{m \in \Nn \mid H^0(X, \Oo_X(mD)) \neq 0 \text{~and~} mD \text{~is Cartier}\}.$$ (In particular, ${\mathbf N}(X, D)=\{0\}$ if for any $m\in \Nn_{>0}$ such that $mD$ Cartier, $H^0(X, \Oo_X(mD)) = 0$.) If ${\mathbf N}(X, D)=\{0\}$, then one puts the Iitaka dimension of $D$ to be $-\infty$, and thus $\Ivol(D)=0$. Otherwise, for any $m\in {\mathbf N}(X, D)$, the linear system $|mD|$ defines a rational map $\phi_{|mD|}: X \dasharrow \Pp H^0(X, \Oo_X(mD))$. Then the Iitaka dimension of $D$ is defined to be $$\kappa(D) \coloneqq \max_{m\in {\mathbf N}(X, D)}\{\dim \phi_{|mD|}(X)\}.$$ In this case, there are constants $a, A>0$ such that $$am^{\kappa(D)} \leq h^0(X, \Oo_X(mD)) \leq Am^{\kappa(D)}$$ for all sufficiently large $m\in {\mathbf N}(X, D)$ (\cite[Corollary 2.1.38]{Laz04I}). Hence 
$$
\Ivol(D)=\limsup_{m\to\infty} \frac{\kappa(D)!h^0(X, \Oo_X(\lfloor mD \rfloor))}{m^{\kappa(D)}}
$$ is a positive real number (cf. \cite[Remark 2.1.39]{Laz04I}).

Recall that the volume of $D$ is defined to be 
$$
\vol(D)=\vol(X,D)\coloneqq\limsup_{m\to\infty} \frac{(\dim X)!h^0(X, \Oo_X(\lfloor mD \rfloor))}{m^{\dim X}},
$$ hence $D$ is big iff $\Ivol(D)=\vol(D)>0$.

A particular  case is when $D=K_X+B$ is an adjoint divisor. If $K_X+B$ is semi-ample, then for sufficiently large $m \in {\mathbf N}(X, D)$, $\phi=\phi_{|m(K_X+B)|}: X \to Z$ is a morphism to the ample model of $(X, B)$. Suppose that $K_X+B=\phi^*D_Z$ for some $\Qq$-divisor $D_Z$ on $Z$, then $\Ivol(K_X+B)=\vol(D_Z)=D_Z^{\dim Z}$ by definition. 

\medskip

When $(X, B)$ is a projective klt pair, by \cite[Corollary 1.1.2]{BCHM10}, 
\[
\oplus_{m=0}^\infty H^0(X, \Oo_X(\lfloor m(K_X+B) \rfloor))
\] is finitely generated.  When $\kappa(K_X+B) \geq 0$, then the rational map
\begin{equation}\label{eq: maps to canonical model}
X \dasharrow \proj \oplus_{m=0}^\infty H^0(X, \Oo_X(\lfloor m(K_X+B) \rfloor))
\end{equation} is called the Iitaka fibration associated with $K_X+B$. This is slightly different from \cite[Definition 2.1.34]{Laz04I}, where an Iitaka fibration associated with a Cartier divisor is defined to be a morphism satisfying certain properties. However, the Iitaka fibration in \cite[Definition 2.1.34]{Laz04I} is unique only up to birational equivalence. Hence, in order to make sense of Conjecture \ref{conj: bounded of base, klt}, we call \eqref{eq: maps to canonical model} the Iitaka fibration. 

\subsection{Fano type (log Calabi-Yau) fibrations}

\begin{definition}[$\ep$-lc Fano type fibration]\label{def: ep-lc Fano type fibration}
Let $\ep$ be a positive real number. An $\ep$-lc Fano type (log Calabi-Yau) fibration consists of a pair $(X, B)$ and a fibration $f: X\to Z$ between normal varieties such that we have the following:
\begin{enumerate}
\item $(X,B)$ is a projective $\ep$-lc pair,
\item $K_X + B\sim_\Rr 0/Z$, and
\item $-K_X$ is big over $Z$, i.e. $X$ is of Fano type over $Z$.
\end{enumerate}
\end{definition}

\begin{definition}[$(d,r,\ep)$-Fano type fibration]\label{def: (d,r,ep)-Fano type fibration}
Let $d,r$ be natural numbers and $\ep$ be a positive real number. A $(d,r,\ep)$-Fano type (log Calabi-Yau) fibration consists of a pair $(X, B)$ and a contraction $f: X\to Z$ such that we have the following:
\begin{enumerate}
\item $(X,B)$ is a projective $\ep$-lc pair of dimension $d$,
\item $K_X + B\sim_\Rr f^*L$ for some $\Rr$-divisor $L$,
\item $-K_X$ is big over $Z$, i.e. $X$ is of Fano type over $Z$, 
\item $A$ is a very ample divisor on $Z$ with $A^{\dim Z} \leq r$, and 
\item $A-L$ is ample.
\end{enumerate}
\end{definition}

We emphasize that a very ample or base point free divisor is naturally a Cartier divisor.

\begin{remark}
A $(d,r,\ep)$-Fano type fibration $(X, B) \to Z$ can be viewed as a special case of the fibration map defined in Theorem \ref{thm: main result} given $B \in \Ii$. In fact, for a $(d,r,\ep)$-Fano type fibration $f: X \to Z$ as above, there exists a klt pair $(Z, \Delta)$ such that $K_X+B\sim_\Qq f^*(K_Z+\Delta)$. Then $A - (K_Z+\Delta)$ is ample, and $A^{\dim Z} \leq r$. By length of extremal rays, $K_Z+\Delta+3(\dim Z) A$ is ample. By taking a general $G \in |6(\dim Z)f^*A|$ and replacing $\ep$ by $\min\{\ep, \frac 1 2\}$, then $(X, B+\frac{1}{2}G)$ is $\ep$-lc. Moreover, $K_X+B+\frac{1}{2}G\sim_{\Qq}f^*(K_Z+\Delta+3(\dim Z)A)$ with $K_X+B+\frac{1}{2}G$ semi-ample. Hence $f: X \to Z$ is the Iitaka fibration associated with $K_X+B+\frac{1}{2}G$. By definition, $B$ is big over $Z$. Moreover, 
\[
\begin{split}
&\Ivol(K_X+B+\frac{1}{2}G)=(K_Z+\Delta+3(\dim Z)A)^{\dim Z} \\
\leq& (3(\dim Z)+1)^{\dim Z} A^{\dim Z} \leq (3(\dim Z)+1)^{\dim Z} r.
\end{split}
\] Thus, $(X, B+\frac 1 2 G), Z$ and $v = (3(\dim Z)+1)^{\dim Z} r$ satisfy the assumptions of Theorem \ref{thm: main result} (after possibly enlarging $\Ii$ to $\Ii\cup \{\frac 1 2\}$). 
\end{remark}

\subsection{Canonical bundle formula}\label{subsec: canonical bundle}\label{subsec: canonical bundle formula} We recall the construction of the canonical bundle formula in \cite{Kaw98}. We follow the notions and notation in \cite{FG14} which appear slightly different from \cite{Amb04}. For more about the canonical bundle formula, see \cite{Kaw98, FM00, Amb04, Amb05}, etc. 

The notion of b-divisors is introduced by Shokurov. Let $X$ be a normal variety. An integral b-divisor on $X$ is an element:
\[
\mathbf D \in \mathbf{Div} X \coloneqq \lim_{Y \to X} \mathrm{Div} Y,
\] where the projective limit is taken over all birational models $f: Y \to X$ proper over $X$, under the push-forward homomorphism $f_*: \mathrm{Div} Y \to \mathrm{Div} X$. If $\mathbf D = \sum d_\Gamma \Gamma$ is a b-divisor on $X$, and $Y \to X$ is a birational model of $X$, then the trace of $\mathbf D$ on $Y$ is the divisor 
\[
\mathbf D_Y \coloneqq \sum_{\Gamma \text{~is a divisor on~} Y} d_\Gamma \Gamma.
\] Let $M$ be a Cartier divisor on $X$, then $\overline M$ is the b-divisor such that $(\overline M)_Y=f^*M$ for any birational model $f: Y \to X$. Divisors with coefficients in $\Qq$ or $\Rr$ are defined similarly. For more details, see \cite[\S 2.3]{Cor07}.

Suppose that $(X, B)$ is a log pair with $B$ a $\Qq$-divisor. The discrepancy b-divisor $\mathbf A = \mathbf A(X, B)$ is the $\Qq$-b-divisor of $X$ with the trace $\mathbf A_Y$ defined by the formula $K_Y =f_*(K_X+B)+\mathbf A_Y$, where $f: Y \to X$ is a proper birational morphism of normal varieties. Similarly, we define $\mathbf A^* = \mathbf A^*(X, B)$ by
\[
\mathbf A_{Y}^{*} = \sum_{a_i >-1} a_i A_i
\] for $K_Y =f^*(K_X +B)+\sum a_i A_i$, where $f: Y \to X$ is a proper birational morphism of normal varieties.

\begin{definition}[Klt-trivial and lc-trivial fibrations]\label{def: klt-trivial fibrations}
A klt-trivial (resp. lc-trivial) fibration $f: (X,B) \to Z$ consists of a proper surjective morphism $f: X \to Z$ between normal varieties with connected fibers and a pair $(X,B)$ satisfying the following properties:
\begin{enumerate}
\item $(X, B)$ is sub-klt (resp. sub-lc) over the generic point of $Z$,
\item ${\rm rank} f_*\Oo_X (\lceil \mathbf A(X, B)\rceil) = 1$ (resp. ${\rm rank} f_*\Oo_X (\lceil \mathbf A^*(X, B)\rceil) = 1$), and
\item there exists a $\Qq$-Cartier $\Qq$-divisor $D$ on $Z$ such that 
\[K_X +B\sim_{\Qq} f^*D.\]
\end{enumerate}
\end{definition}

Let $f: (X, B) \to Z$ be an lc-trivial fibration and $P$ be a prime divisor on $Z$. Because $Z$ is normal, after shrinking around $P$, we can assume that $P$ is Cartier. Define
\[
b_P \coloneqq \max\{t \in \Rr \mid (X, B+tf^*P) \text{~is sub-lc over the generic point of~}P\}
\] and set
\[
B_Z \coloneqq \sum_{P} (1-b_P) P, \quad M_Z \coloneqq D-(K_Z+B_Z).
\] Then the following canonical bundle formula holds
\begin{equation}\label{eq: canonical bundle formula}
K_X+B\sim_\Qq f^*(K_Z+B_Z+M_Z).
\end{equation}

In this formula, $B_Z$ is called the divisorial part and $M_Z$ is called the moduli part. When $(X, B)$ is lc, there exist $\Qq$-b-divisors $\mathbf B$ and $\mathbf M$ of $X$ such that $\mathbf B_Z=B_Z$ and $\mathbf M_Z=M_Z$. Moreover, $\mathbf M$ is b-nef and b-abundant in the sense that there is a proper birational morphism $Z' \to Z$ and a proper surjective morphism $h: Z' \to W$ between normal varieties such that (1) $\mathbf M_{Z'} \sim_{\Qq} h^*H$ for some nef and big $\Qq$-divisor $H$ on $W$, and (2) $\mathbf M = \overline{\mathbf M_{Z'}}$. For details, see \cite[Theorem 3.3]{Amb05} and \cite[Theorem 1.1]{FG14}.

The definitions of $B_Z$ and $M_Z$ still make sense when $B$ (in Definition \ref{def: klt-trivial fibrations}) is an $\Rr$-divisor. In this case, we still have b-divisors $\mathbf B$ and $\mathbf M$. When $(X, B)$ is lc over the generic point of $Z$, $M_Z$ is pseudo-effective (\cite[Theorem 3.6]{Bir19}). For more results in this setting, see \cite[\S 3.4]{Bir19} and \cite[\S 6.1]{Bir18}.

\medskip

Next, we have the following effective adjunction conjecture (see \cite[Conjecture 7.13.3]{PS09}). 

\begin{conjecture}[{Effective adjunction}]\label{conj: effective adjunction}
Let $f: (X, B) \to Z$ be an lc-trivial fibration. There exists a positive integer $m$ depending only on the dimension of $X$ and the horizontal multiplicities of $B$ (a finite set of rational numbers) such that $m\mathbf M$ is a base point free b-divisor (i.e. there is a birational morphism between projective varieties $h: Y' \to Y$ such that  $m\mathbf M_{Y'}$ is base point free and $\mathbf M = \overline{\mathbf M_{Y'}}$).
\end{conjecture}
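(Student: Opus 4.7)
The plan is to attack the Effective Adjunction Conjecture by combining boundedness of the generic fibers with the variation of Hodge structures that underlies the Fujino--Mori--Ambro description of the moduli b-divisor. First, I would observe that fixing $\dim X$ and the (finite) set of horizontal multiplicities of $B$ forces the generic fiber $(F, B|_F)$ to be a klt log Calabi--Yau pair of bounded dimension whose coefficients lie in a finite set of rationals. By the log BAB-type results of Birkar (cf.\ the boundedness theorems cited in the excerpt via \cite{Bir16BAB}), such pairs form a bounded family, which should give a universal positive integer $N = N(\dim X, \text{horiz.\ mult.})$ with $N(K_F + B|_F) \sim 0$. This is the step I expect to be the main obstacle: going from log-birational boundedness of fibers to an effective Cartier index bound is delicate, and it is really where an unconditional theorem could slip.

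Given such an $N$, the next step is to perform a generically finite base change $\pi : Z' \to Z$ of bounded degree (controlled by $N$ and the monodromy of the local system $R^w f_* \Qq_X$ on the smooth locus) so that the pullback fibration has unipotent local monodromies along its singular divisor. After further birational modification to reach semistable reduction, the moduli part $\mathbf{M}_{Z'}$ is $\Qq$-linearly equivalent to the first Chern class of the bottom piece of the Hodge filtration of the underlying VHS, up to a $\Qq$-divisor whose denominator is bounded by $N$.

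Now I would invoke Kollár--Viehweg type semi-positivity together with effective base point freeness on the bounded target: since $\mathbf{M}$ is b-nef and b-abundant by \cite{FG14}, a suitable multiple of a Hodge-theoretic representative is pulled back from a nef and big divisor $H$ on a lower dimensional $W$, to which one may apply effective versions of the base point free theorem. The constants appearing (Koll\'ar's semi-positivity multiple, the base point free constant for $H$, the degree of $\pi$, and the index $N$) all depend only on $\dim X$ and the horizontal multiplicities, so their product yields a candidate $m$.

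Finally, to descend from $Z'$ back to $Z$ I would use a Chevalley--Weil / trace argument: the degree of $\pi$ is bounded, so the image of a basis of global sections of $m \mathbf{M}_{Z'}$ under the trace map produces global sections of a bounded multiple of $\mathbf{M}_Z$ that generate it. Throughout, the fragile point is effectivity: birational boundedness of fibers combined with qualitative semi-positivity is essentially known, but obtaining a truly uniform $m$ from these ingredients requires an effective Cartier index bound for klt log Calabi--Yau pairs with DCC (in fact, finite) coefficients, which is the central unsolved piece of the puzzle.
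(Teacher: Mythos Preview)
The statement you are attempting to prove is Conjecture~\ref{conj: effective adjunction}, and the paper does \emph{not} provide a proof: it is recorded explicitly as an open conjecture, with the remark immediately following it that the only known case is when general fibers are curves (\cite[Theorem~8.1]{PS09}). There is therefore no proof in the paper against which to compare your proposal.

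Your outline is a reasonable heuristic for how one might hope to approach the conjecture, and you correctly flag the crux at the end: a uniform Cartier index bound for klt log Calabi--Yau pairs with coefficients in a fixed finite set is itself open in dimension $\geq 3$. However, there is a concrete misstep earlier in the proposal. You invoke ``log BAB-type results of Birkar'' to conclude that the generic fibers $(F, B|_F)$ form a bounded family. The theorems of \cite{Bir16BAB} concern $\epsilon$-lc \emph{Fano} varieties; in the setting of the effective adjunction conjecture the fibers are only klt with $K_F + B|_F \sim_\Qq 0$, and such pairs are not bounded in general (K3 surfaces and abelian varieties already show this). In the paper's main theorem this issue is sidestepped by the extra hypothesis that $B$ is big over the base, which forces the fibers to be of Fano type, but Conjecture~\ref{conj: effective adjunction} carries no such assumption. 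Even if one retreats from boundedness of fibers to the weaker statement you actually need---a uniform $N$ with $N(K_F + B|_F)\sim 0$---this is the index conjecture for log Calabi--Yau pairs, which is open. The later steps (semistable reduction, Hodge bundle description of $\mathbf M$, effective base point freeness on $W$, trace descent) are plausible in outline but each hides its own effectivity issues, so the proposal remains a programme rather than a proof.
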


This conjecture is known when general fibers of $f$ are curves (see \cite[Theorem 8.1]{PS09} and references therein).

\section{Proof of Theorem \ref{thm: main result} and its corollaries}\label{sec: proof of the main result}

For convenience, we introduce the following additional notation. Let $D, L$ be two $\Rr$-divisors, then we write $D \preceq_\Rr L$ to indicate that there exists an effective $\Rr$-divisor $E$ such that $D+E \sim_{\Rr} L$. If $D, L$ are $\Qq$-Cartier, then $D \preceq_\Rr L$ implies that $\vol(D) \leq \vol(L)$. We use the notation $m=m(a_1, \ldots, a_k)$ to emphasize that the natural number $m$ depends only on the choice of factors $a_1, \ldots, a_k$. For example, $m=m(\dim X, \ep, \Ii)$ means that $m$ depends only on the dimension of $X$, $\ep$ and the coefficient set $\Ii$.

\medskip

 The proof of Theorem \ref{thm: main result} relies on Proposition \ref{prop: key} whose proof is a bit technical and thus will be given afterwards. We list the maps that shall appear in the proof of Theorem \ref{thm: main result} in the following diagram (starting from Step 2). 

\[
 \xymatrix{
  & X'  \ar[dd]_\pi \ar@/_/[dl]_g \ar@{-->}[rr]^{\theta} \ar@/_/[dddr]^{f'}& &Y \ar@/^/[dddl]^h \ar@/^/[dr]^w&\\
X \ar[dd]_f&&&&Y'\ar@/^/[ddll]^u\\
&\tilde Z \ar[dl]^p \ar[dr]_q&&&\\
Z \ar@{-->}[rr]^\mu&&Z'&&
}
 \]
 
 \begin{proof}[Proof of Theorem \ref{thm: main result}]
 
Let $k=\dim Z$. We can assume that $k>0$. Replacing $\ep$ by $\min\{\ep, \frac 1 2\}$, we can assume that $\ep<1$.

 \medskip
 
 \noindent Step 1. Replace $(X, B)$ by its good minimal model.
 
Take a log resolution $g: X' \to X$ of $(X, B)$ such that $f \circ g: X' \to Z$ is a morphism. Let $K_{X'}+B''=g^*(K_X+B)+E'$, where $B'' = g_*^{-1}B+(1-\ep/2) \Exc(g)$. Then $(X', B'')$ is $\ep/2$-lc and $B''$ is big over $Z$. If $F'$ is a general fiber of $f\circ g$, then $(F', B''|_{F'})$ is klt and $B''|_{F'}$ is big. By \cite[Theorem 1.2]{BCHM10} and the base point free theorem, $(F', B''|_{F'})$ has a good minimal model. Applying \cite[Theorem 4.4]{Lai11}, $(X', B'')$ has a good minimal model. In \cite[Theorem 4.4]{Lai11}, this result is obtained for $X$ with terminal singularities, however, the same argument works for a klt pair. In fact, the terminal singularity assumption is only used to show that when $(X, B)$ is terminal, then a good minimal model of $(X', g_*^{-1}B)$ is also a good minimal model of $(X, B)$ (see \cite[Lemma 2.2]{Lai11}). However, a good minimal model of $(X', B'')$ is still a good minimal model of $(X, B)$ (for example, see \cite[Remark 2.6 (i)]{Bir10}). Thus one can work with $(X', B'')$ in the proof of \cite[Theorem 4.4]{Lai11} and the result follows. 

Replacing $(X, B)$ by a good minimal model of $(X', B'')$, $\ep$ by $\ep/2$, and possibly enlarge $\Ii$ to $\Ii\cup\{1-\ep/2\}$, we can assume that $f: X \to Z$ is a morphism and it is an $\ep$-lc Fano type fibration for $(X, B)$. 

\medskip
 
\noindent Step 2. The construction of $Z'$.
 
By $K_X+B \sim_\Qq 0/Z$ and the canonical bundle formula (see \eqref{eq: canonical bundle formula}), $K_X+B\sim_\Qq f^*(K_Z+\Delta+M)$, where $\Delta$ is the divisorial part and $M$ is the moduli part. As $B\in \Ii$ with $\Ii$ a DCC set, by ACC for log canonical thresholds (see \cite[Theorem 1.1]{HMX14}), there exists a DCC set $\Jj=\Jj(\dim X, \Ii)$ such that $\Delta \in \Jj$. For a general fiber $F_f$ of $f$, $(F_f, B|_{F_f})$ is $\ep$-lc Fano and coefficients of $B|_{F_f}$ are bounded below (in fact, finite by \cite[Theorem D]{HMX14}). Thus $(F_f, B|_{F_f})$ belongs to a bounded family (\cite[Theorem 1.1]{Bir16BAB}, or use \cite[Theorem 1.3]{HX15}). 

We argue as \cite[Theorem 1.4]{HX15} to show that there is an $m=m(d, \Ii, \ep)$ such that $|m(K_Z+\Delta+M)|$ defines a birational map $\mu: Z \dasharrow Z'$. Let $\tau: \bar Z \to Z$ be a log smooth model of $(Z, \Delta)$ such that $\tau^{-1}_*\Delta \cup \Exc(\tau)$ is an snc divisor and the trace $M_{\bar Z}$ of the moduli b-divisor $\mathbf M$ is nef on $\bar Z$. Then by the boundedness of $F_f$, there exists an $r=r(\dim F_f, \ep)\in \Nn$ such that $rM_{\bar Z}$ is an integral divisor (see \cite[Claim 3.2]{HX15} or \cite{Tod10} which both rely on \cite{FM00}). Let $K_{\bar Z}+\bar \Delta + M_{\bar Z} =\tau^*(K_Z+\Delta+M)+E'$ with $\bar \Delta = \tau_*^{-1} \Delta+\Exc(\tau)$. Then $E' \geq 0$ is $\tau$-exceptional, and $({\bar Z}, \bar \Delta + M_{\bar Z})$ is generalized lc (see \cite[Definition 4.1]{BZ16}) with $\bar \Delta \in \Jj \cup\{1\}$. Moreover, $K_{\bar Z}+\bar \Delta + M_{\bar Z}$ is big. By \cite[Theorem 1.3]{BZ16} , there exists an $m=m(d, \Ii, \ep) \in \Nn$, such that $|m(K_{\bar Z}+\bar \Delta + M_{\bar Z})|$ defines a birational map. For an $\Rr$-Cartier divisor $D$ on $Z$, there exists a $\tau$-exceptional divisor $F$ such that $\lfloor \tau^*D \rfloor \leq \tau^{-1}_*\lfloor D \rfloor + F$. Hence if $F'$ is a $\tau$-exceptional divisor such that $|\tau^*D+F'|$ defines a birational map, then $|D|$ also defines a birational map. Apply this to $D=m(K_Z+\Delta+M), F' =mE'$, then $|m(K_Z+\Delta+M)|$ defines a birational map $\mu: Z \dasharrow Z'$.

Take a resolution $p: \tilde Z \to Z$ such that the movable part of $p^*|m(K_Z+\Delta+M)|$ is base point free, and write
\begin{equation}\label{eq: movable is free}
p^*|m(K_Z+\Delta+M)| = |\tilde R|+\tilde F, 
\end{equation} where $|\tilde R|$ is the movable part and $\tilde F$ is the fixed part. This can be done by first passing to a small $\Qq$-factorialization of $Z$ (it exists because there exists $\Delta_Z$ such that $(Z, \Delta_Z)$ is klt by \cite[Theorem 0.2]{Amb05}), and then resolving the $\Qq$-Cartier divisor which is the strict transform of $\lfloor m(K_Z+\Delta+M)\rfloor$ on this small $\Qq$-factorial model. Now, $|\tilde R|$ defines a morphism $q: \tilde Z \to Z'$ which resolves $\mu\circ p$. Let $A$ be the very ample divisor on $Z'$ such that $\ti R=q^*A$. Then
\begin{equation}\label{eq: bound A^k}
A^k \leq \vol(\tilde Z, \tilde R+\tilde F) \leq \vol(Z, m(K_Z+\Delta+M)) \leq m^k v.
\end{equation} Thus $Z'$ belongs to a bounded family.

\medskip

 \noindent Step 3. The construction of $(X', D')$.

Let $g: X' \to X$ be a log resolution of $(X, B)$ which also resolves $X \dto \tilde Z$. Let $\pi: X' \to \tilde Z$ be the corresponding morphism and $f'=q \circ \pi$. Set 
\begin{equation}\label{eq: D'}
D' = (1+\frac \ep 2) g_*^{-1}B + (1-\frac \ep 2)\Exc(g).
\end{equation} As $(X, B)$ is $\ep$-lc, $(X', D')$ is $\frac \ep 2$-lc. For $0<\delta \ll 1$, 
\begin{equation}
K_{X'}+D' \geq g^*(K_X+(1+\delta)B),
\end{equation} thus $K_{X'}+D'$ is big$/Z$. $K_{X'}+D'$ is also big$/Z'$ as $Z \dto Z'$ is birational. By \cite[Theorem 1.2]{BCHM10}, there exists a minimal model $\theta: X' \dto Y/Z'$ of $(X', D')/Z'$. Let $D_Y = \theta_*D'$, then $K_Y+D_Y$ is semi-ample$/Z'$ by the base point free theorem.

\medskip

 \noindent Step 4. Log birational boundedness of $(X, B)$.

We claim that $K_Y+D_Y+3dh^*A$ is a nef and big divisor. Recall that $\dim Y=d$ and $A$ is very ample on $Z'$ such that $\ti R=q^*A$. If there is an extremal curve $C$ such that $C \cdot (K_Y+D_Y+3dh^*A)<0$, then $h_*C\neq 0$ because $K_Y+D_Y$ is nef$/Z'$. Moreover, $C \cdot (K_Y+D_Y)<0$. By the length of extremal rays, there exists a curve $C'$ on the same ray class of $C$ such that $-(2d+1) \leq C' \cdot (K_Y+D_Y)<0$. But $C'\cdot(3dh^*A) \geq 3d$, we get $C' \cdot (K_Y+D_Y+3dh^*A)>0$, a contradiction. Next, $K_X+B$ is pseudo-effective and thus $K_{X'}+D'$ is pseudo-effective. As the image of $K_{X'}+D'$, $K_Y+D_Y$ is also pseudo-effective. Let $(Y', D_{Y'})$ be the ample model of $(Y, D_Y)$ over $Z'$ with $w: Y \to Y'$ and $u: Y' \to Z'$ the corresponding morphisms. Then $K_{Y'}+D_{Y'}+Nu^*A$ is ample for $N \gg 1$. Thus 
\begin{equation}\label{eq: bigness}
K_Y+D_Y+3dh^*A = (1-\frac 1 N)(K_Y+D_Y)+(\frac 1 N (K_Y+D_Y) + 3dh^*A)
\end{equation} is big.

Let $G \in |6dh^*A|$ be a general element, then $(Y, D_Y+\frac 1 2 G)$ is still $\frac \ep 2$-lc with $D_Y+\frac 1 2 G \in \Ii'$, where $\Ii' =(1+\frac \ep 2)\Ii \cup\{1-\frac \ep 2\}\cup\{\frac 1 2\}$ is a DCC set. By \cite[Theorem 1.3 (3)]{HMX14}, there exists $m'=m'(\dim Y, \Ii') \in \Nn$ such that the linear system $|m'(K_Y+ D_Y+\frac 1 2 G)|$ defines a birational map $\varphi: Y \dasharrow Y''$.   By Proposition \ref{prop: key}, 
\begin{equation}\label{eq: N}
\vol(Y, (K_Y+ D_Y+\frac 1 2 G))\leq N(\Ii, \ep, d,v),
\end{equation} where $N(\Ii, \ep, d,v)$ depends only on $\Ii, \ep, d,v$. Moreover, by \cite[Lemma 7.3]{HMX14}, there is a rational number $\beta=\beta(d,\Ii')<1$ such that $K_Y+\beta(D_Y+\frac 1 2 G)$ is still big.

By \cite[Lemma 2.4.2 (3)]{HMX13}, to show that the family $\{(X, B)\}$ is log birationally bounded, it is enough to show that $\{(Y, D_Y)\}$ is log birationally bounded. Then by \cite[Lemma 2.4.2 (4)]{HMX13}, it is enough to show that 
\[
(\Supp D_{Y''} + \Exc(\varphi^{-1})) \cdot (H)^{d-1}
\] is bounded above, where $D_{Y''}$ is the strict transform of $D_Y$ and $H$ is the very ample divisor on $Y''$ determined by $\varphi$. As $\Ii'$ is a DCC set, let $0<\delta = \min \Ii'$. Let $\pi_1: W \to Y, \pi_2: W \to Y''$ be a common resolution such that $\varphi \circ \pi_1 = \pi_2$ and the movable part of $\pi_1^*|m'(K_Y+D_Y+\frac 1 2 G)|$ is base point free. Let $D_W=\pi_{1*}^{-1}D_Y$. By \cite[Lemma 3.2]{HMX13},
\begin{equation}\label{eq: estimate1}
\begin{split}
&(\Supp D_{Y''} + \Exc(\varphi^{-1})) \cdot (H)^{d-1}\\
\leq &(\Supp D_{W} + \Exc(\pi_1)) \cdot (\pi_2^*H)^{d-1}\\
\leq & {2^d}\vol(W, K_{W}+\Supp D_{W} + \Exc(\pi_1) +2(2d+1) \pi_2^*H)\\
\leq & {2^d}\vol(W, K_{W}+\frac 1 \delta D_{W} + \Exc(\pi_1) +2(2d+1) \pi_2^*H)\\
\leq & {2^d}\vol(W, \pi_1^*(K_{Y}+\frac 1 \delta D_{Y}) + E \\
&\qquad +2(2d+1) \pi_1^*(m'(K_Y+D_Y+\frac 1 2 G))),
\end{split}
\end{equation} where $E$ is a $\pi_1$-exceptional divisor with sufficiently large coefficients. Since $K_Y+\beta(D_Y+\frac 1 2 G)$ is big, there is $\Theta \geq 0$ such that $\Theta+(1-\beta)D_Y \sim_\Rr K_Y+D_Y+\frac 1 2 G$. Hence $D_Y \preceq_\Rr \frac{1}{1-\beta}(K_Y+D_Y+\frac 1 2 G)$. In particular, \[
K_Y+\frac 1 \delta D_Y \preceq_{\Rr} (1+\frac{1}{\delta(1-\beta)})(K_Y+D_Y+\frac 1 2 G)).
\] Now, continue the estimates in \eqref{eq: estimate1}, we have
\begin{equation}\label{eq: estimate2}
\begin{split}
&(\Supp D_{Y''} + \Exc(\varphi^{-1})) \cdot (H)^{d-1}\\
\leq & {2^d}\vol(Y, (K_{Y}+\frac 1 \delta D_{Y}) + 2(2d+1)(m'(K_Y+D_Y+\frac 1 2 G)))\\
\leq & {2^d}\vol((1+\frac{1}{\delta(1-\beta)}+2(2d+1)m')(K_Y+D_Y+\frac 1 2 G))\\
\leq & 2^d(1+\frac{1}{\delta(1-\beta)}+2(2d+1)m')^d N(\Ii, \ep, d,v).
\end{split}
\end{equation} This final estimate depends only on $\Ii, \ep, d,v$. Hence the set of log pairs $\{(X, B)\}$ in Theorem \ref{thm: main result} is log birationally bounded.

\medskip

 \noindent Step 5. Boundedness of $Z$.
  
The argument below follows the same lines as \cite[Theorem 1.6]{HMX14}. However, \cite[Theorem 1.6]{HMX14} deals with the $K_X+B$ ample case (see \cite[Lemma 9.1]{HMX14}) while $K_X+B$ is only semi-ample here. Hence, we provide details on this part. 

First, by Step 4, there exists a projective morphism $\Xx \to \mathcal T$ between schemes of finite type, and a reduced subscheme $\Bb \subset \Xx$ such that  for any $(X, B)$ in Theorem \ref{thm: main result}, there exists $t\in \Tt$ and a birational map $i: X \dto \Xx_t$ satisfying $\Supp i_{*}^{-1}(B) \cup \Exc(i^{-1}) \subset \Supp\Bb_t$. Taking log resolutions and stratifying $\Tt$ into a disjoint union of subvarieties, we can assume that $(\Xx, \Bb)$ is log smooth over $\Tt$ (i.e. $\Xx$ as well as any stratum of $\Bb$ are smooth over $\Tt$). Passing to a finite cover of $\Tt$ (see \cite[Claim 4.38.1]{Kol13}), we can assume that every stratum of $\Bb$ has irreducible fibers over $\Tt$. For $(\Xx, (1-\ep)\Bb)$, by a sequence of blowups of strata, we extract all the divisors whose log discrepancies are $\leq 1$. Then one can show that $i^{-1}: \Xx_t \dasharrow X$ is a birational contraction, that is, $i$ does not contract any divisor on $X$. This step is done in \cite[Proof of (1.6), Page 556--557]{HMX14}.

Next, let $0<\delta\coloneqq \min \Ii$. We define a $\Qq$-divisor $\bar \Bb$ on $\Xx$ corresponding to $(X, B)$ as follows. Let $i^{-1}: \Xx_t \dasharrow X$ be the above birational contraction. Then $\bar\Bb$ is the $\Qq$-divisor such that $\Supp \bar\Bb \subset \Supp \Bb$ and $\bar\Bb_t$ is the sum of the strict transform of $B$ and $(1-\ep)\Exc(i^{-1})$. This $\bar\Bb$ is uniquely determined because $(\Xx, \Bb)$ is log smooth over $\Tt$ and each stratum of $\Bb$ has irreducible fibers over $\Tt$. We use \cite[Corollary 1.1.5 (2)]{BCHM10} to show the following claim:

Suppose that $(\Xx_t, \bar\Bb_t)$ corresponds to some $(X, B)$. Then there are finitely many ample models over $\Tt$ for 
\[
\{ (\Xx, \Bb') \mid \Supp\Bb'=\Supp \bar\Bb, \text{~and~} \delta \Supp \bar\Bb \leq \Bb' \leq (1-\ep)\Supp \bar\Bb\}.
\]

We can choose a rational number $0 < \tau \ll 1$ such that $K_X+(1+\tau)B$ is big by the same argument as \eqref{eq: bigness}. Then $K_{\Xx}+(1+\tau)\bar\Bb$ is big over $\Tt$ by \cite[Theorem 1.2]{HMX18}. Let $\Aa+\Ee \sim_\Qq K_{\Xx}+(1+\tau)\bar\Bb/\Tt$ with $\Ee \geq 0$ and $\Aa>0$ an ample $\Qq$-Cartier divisor which has no components in common with $\Ee$ and $\Supp \bar\Bb$. Choose $\alpha \in \Qq_{>0}$ such that $(\Xx, \alpha(\Aa+\Ee)+(1-\ep/2) \Supp \bar\Bb)$ is klt. By choosing $\gamma \in \Qq_{>0}$ sufficiently small, we can assume that $\Delta'$ in 
\[
(1+\gamma\alpha)(K_\Xx+\Bb') = K_\Xx+\gamma\alpha(K_\Xx+(1+\tau)\bar\Bb)+\Delta'
\] is effective with coefficients $<(1-\ep/2)$. Indeed, $\Delta' = (1+\gamma\alpha)\Bb'-\gamma\alpha(1+\tau)\bar\Bb$, where $\Bb'$ has coefficients in $[\delta, (1-\ep)]$ and $\Supp \Bb' = \Supp \bar\Bb$. Moreover, $\Supp \Delta' \subset \Supp \bar\Bb$. Now $(\Xx, \gamma\alpha \Aa+\Delta'+\gamma\alpha \Ee)$ is klt with
\[
(1+\gamma\alpha)(K_\Xx+\Bb') \sim_\Qq K_\Xx+\gamma\alpha \Aa+\Delta'+\gamma\alpha \Ee/\Tt.
\] Then the ample model of $(\Xx, \Bb')/\Tt$ is also the ample model of $(\Xx, \gamma\alpha \Aa+\Delta'+\gamma\alpha \Ee)/\Tt$. By \cite[Corollary 1.1.5 (2)]{BCHM10}, we have finitely many rational maps $\psi_j: \Xx \dasharrow \mathcal Z_j$ over $\Tt$ such that for a klt pair $(\Xx, \gamma\alpha \Aa+\Delta'+\gamma\alpha \Ee)$ with $\Supp \Delta' \subset \Supp \bar\Bb$ and $K_\Xx+\gamma\alpha \Aa+\Delta'+\gamma\alpha \Ee$ pseudo-effective, there exists a $\psi_j$ which is the ample model of $(\Xx, \gamma\alpha \Aa+\Delta'+\gamma\alpha \Ee)$ over $\Tt$. Hence, $\psi_j$ is also the ample model of $(\Xx, \Bb')$ over $\Tt$.

Finally, by \cite[Corollary 1.4]{HMX18}, if $\psi_j: \Xx \dasharrow \mathcal Z_j/\Tt$ is the ample model of $(\Xx, \Bb')$, then $\psi_{j,t}: \Xx_t \dasharrow \mathcal Z_{j,t}$ is the ample model of $(\Xx_t, \Bb'_t)$ for every closed point $t\in\Tt$. Because $\Xx_t \dasharrow X$ is a birational contraction, the ample model of $(X, B)$ is the ample model of $(\Xx_t, \bar\Bb_t)$. In fact, let $r_1: W \to X$, $r_2: W \to \Xx_t$ be a common log resolution that also resolves $i: X \dasharrow \Xx_t$. Then $K_W+r_{1,*}^{-1}B +(1-\ep)\Exc(r_1) = r_1^*(K_X+B)+E_1$ and $K_W+r_{2,*}^{-1}\bar\Bb_t +(1-\ep)\Exc(r_2) = r_1^*(K_{\Xx_t}+\bar\Bb_t)+E_2$ with $E_1, E_2 \geq 0$. But $r_{1,*}^{-1}B +(1-\ep)\Exc(r_1)= r_{2,*}^{-1}\bar\Bb_t +(1-\ep)\Exc(r_2)$ by the construction of $\bar\Bb$. Then the assertion follows because the ample model of $(W, r_{1,*}^{-1}B +(1-\ep)\Exc(r_1))$ (resp. $(W, r_{2,*}^{-1}\bar\Bb_t +(1-\ep)\Exc(r_2))$) is the same as the ample model of $(X, B)$ (resp. $(\Xx_t, \bar\Bb_t)$). Because $K_X+ B$ is semi-ample, $Z=\proj\oplus_{m=0}^\infty H^0(X, \Oo_X(\lfloor m(K_X+B)\rfloor))$ is exactly the ample model of $(X, B)$ (\cite[Lemma 3.6.6 (3)]{BCHM10}), this shows the boundedness for $\Ff(d,\ep,v,\Ii)$.
\end{proof}

Next, we show Proposition \ref{prop: key} which is used in Step 4 in the proof of Theorem \ref{thm: main result}. The strategy is similar to \cite[Theorem 4.1]{Jia18} (also see \cite[Proposition 4.1]{DCS16} and \cite[Proposition 5.8]{Bir18}). However, it is much more subtle here because $h: Y \to Z'$ in the proof of Theorem \ref{thm: main result} may not be an $\ep$-lc Fano type fibration. Under the notation in the proof of Theorem \ref{thm: main result}, we have the following auxiliary constructions: 

\[
 \xymatrix{
  & X' \ar[dd]^\pi \ar@/_/[dl]_g \ar@{-->}[rr] & &W  \ar[d]^\psi\\
X \ar[dd]_f&&&V \ar@/^/[dll]^\phi \ar@/^/[ddl]^{q\circ\phi}\\
&\tilde Z \ar[dl]^p \ar[dr]_q& &\\
Z \ar@{-->}[rr]^\mu&&Z'&
}
 \]
 
Recall that $g: X' \to X$ is a log resolution of $(X, B)$. Let $B'= g_*^{-1}B + (1-\ep/2) \Exc(g)$, then
\[
K_{X'}+B' =g^*(K_X+B)+E',
\] where $E'>0$ is $g$-exceptional. There is a non-empty smooth open set $U_{\ti Z}\subset \tilde Z$ such that $(X'|_{\pi^{-1}(U_{\ti Z})}, B'|_{\pi^{-1}(U_{\ti Z})})$ is log smooth over $U_{\ti Z}$. Notice that $p: \tilde Z \to Z$ is birational. For a general closed point $t \in U_{\ti Z}$, let $F_\pi$ be the fiber $\pi^{-1}(t)$ and $F_f$ be the fiber $f^{-1}(p(t))$. Then
\[
K_{F_{\pi}}+B'|_{F_\pi} =(g|_{F_\pi})^*(K_{F_f}+B|_{F_f})+E'|_{F_\pi},
\] where $E'|_{F_\pi} \geq 0$ is an exceptional divisor for $g|_{F_\pi}: F_\pi \to F_f$. Because $K_{F_f}+B|_{F_f} \sim_\Qq 0$, $(F_{\pi}, B'|_{F_\pi})$ has a good minimal model. In fact, we can run a partial $(K_{F_{\pi}}+B'|_{F_\pi})$-MMP over $F_f$. After finitely many steps, $E'|_{F_\pi}$ must be contracted and thus the resulting log pair is crepant over $(F_f, B|_{F_f})$. This is a good minimal model for $(F_{\pi}, B'|_{F_\pi})$ (alternatively, this follows from \cite[Theorem 1.2]{BCHM10} and the base point free theorem as before). Then by \cite[Theorem 1.2]{HMX18}, $(X'|_{\pi^{-1}(U_{\ti Z})}, B'|_{\pi^{-1}(U_{\ti Z})})$ has a good minimal model over $U_{\ti Z}$. By \cite[Theorem 1.1]{HX13} (also see \cite[Theorem 1.4]{Bir12b}), $(X', B')$ has a good minimal model $(W, B_W)$ over $\ti Z$. Let $\psi: W \to V/\ti Z$ be the morphism associated with $K_W+B_W$. Then $K_W+B_W \sim_\Qq 0/V$. Let $\phi: V \to \ti Z$ be the corresponding birational morphism.

Next, let 
\begin{equation}\label{eq: def of bar B'}
K_{X'}+\bar B' = g^*(K_X+B),
\end{equation} then $(X', \bar B')$ is a subklt pair such that $K_{X'}+\bar B' \sim_\Qq \pi^*p^* L$, where $$L\coloneqq K_Z+\Delta+M$$ (see Step 2 in the proof of Theorem \ref{thm: main result}). In particular, $K_{X'}+\bar B'\sim_\Qq 0/\ti Z$. Let $K_W+\bar B_W$ be the pushforward of $K_{X'}+\bar B'$ on $W$.
Then $(W, \bar B_W)$ is subklt as $K_{X'}+\bar B' \sim_\Qq 0/\ti Z$. By the canonical bundle formula, 
\begin{equation}\label{eq: canonical bundle for W, bar B}
K_W+\bar B_W \sim_\Qq \psi^*(K_V+\bar \Delta_V+M_V),
\end{equation} where $\bar \Delta_V$ is the divisorial part and $M_V=L_V - (K_V+\bar \Delta_V)$ is the moduli part with $L_V \coloneqq \phi^*p^*L$. Apply the canonical bundle formula to the klt-trivial fibration $\psi: (W, B_W) \to V$, we have
\begin{equation}\label{eq: canonical bundle for W, B}
K_W+B_W \sim_\Qq \psi^*(K_V+\Delta_V+M_V).
\end{equation} Notice that the moduli parts in \eqref{eq: canonical bundle for W, bar B} and \eqref{eq: canonical bundle for W, B} can be chosen as the same $\Qq$-divisor by the following reason (also see \cite[Lemma 3.5]{Bir19}).  

By construction, $K_W+B_W=K_W+\bar B_W+E_W$ for some $E_W \geq 0$. Because $K_W+B_W \sim_\Qq 0 / V$ and $K_W+\bar B_W+E_W\sim_\Qq 0 / V$, we have $E_W\sim_\Qq 0/V$. Then $E_W = \psi^*E_V$ for some $\Qq$-divisor $E_V\geq 0$ on $V$. This can be derived from \cite[Lemma 1.7]{KMM94}. Although $V$ may not be $\Qq$-factorial here, it is enough to show our claim on the smooth locus of $V$ because we have $E_W \sim_\Qq 0/V$ instead of merely $E_W \equiv 0/V$.

By the definition of the divisorial parts, $\Delta_V = \bar \Delta_V+E_V$. Notice that 
\[
K_W+B_W \sim_\Qq \psi^*(L_V+E_V).
\] Hence the moduli part for $(W, B_W) \to V$ can be chosen as
\[
L_V+E_V -(K_V+\Delta_V),
\] which is exactly $L_V- (K_V+\bar \Delta_V)=M_V$. 

Recall that in Step 2 of the proof of Theorem \ref{thm: main result}, we have $K_X+B\sim_\Qq f^*(K_Z+\Delta+M)$. By above discussion, for $\eta \coloneqq p \circ\phi $,
\begin{equation}\label{eq: eta}
K_V+\Delta_V+M_V= \eta^*(K_Z+\Delta+M)+E_V.
\end{equation}

Next, recall that $p: \ti Z \to Z$ is a log resolution such that $p^*|m(K_Z+\Delta+M)|=|\ti R| +\ti F$ (see \eqref{eq: movable is free}), where $|\ti R|$ is base point free and defines a birational morphism $q: \ti Z \to Z'$. Moreover, $Z'$ belongs to a bounded family which depends only on $\Ii, \ep, d, v$ (see Step 2 in the proof of Theorem \ref{thm: main result}, notice that we do not use Proposition \ref{prop: key} until Step 4). Let $R_V =\phi^*\ti R$ and $\dim V =\dim Z = k$. The key estimate is the following lemma.

\begin{lemma}\label{le: key}
Under the assumptions in Theorem \ref{thm: main result}, using the notation introduced above, there exists $\mathfrak R = \mathfrak R (\Ii, \ep, d, v) \in \Qq_{>0}$ such that $(K_V+\Delta_V+M_V) \cdot { R_V}^{k-1} < \mathfrak R$.
\end{lemma}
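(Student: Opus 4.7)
The strategy is to split the intersection according to equation \eqref{eq: eta} as
\[
(K_V+\Delta_V+M_V)\cdot R_V^{k-1} = \eta^*L\cdot R_V^{k-1} + E_V\cdot R_V^{k-1},
\]
and to bound each of the two non-negative summands separately: the first is nef $\cdot$ nef, while the second is effective $\cdot$ nef.

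For the main term I would use the birationality of $\phi\colon V\to \tilde Z$ together with $R_V=\phi^*\tilde R$: the projection formula gives $\eta^*L\cdot R_V^{k-1}=p^*L\cdot\tilde R^{k-1}$, an intersection number on $\tilde Z$. Next, using $\tilde R\sim mp^*L-\tilde F$ from \eqref{eq: movable is free} with $\tilde F\geq 0$, I would iteratively exchange $\tilde R$ for $mp^*L$: for $j=1,\ldots,k-1$,
\[
(p^*L)^j\cdot\tilde R^{k-j}=(p^*L)^j\cdot\tilde R^{k-j-1}\cdot(mp^*L-\tilde F)\leq m(p^*L)^{j+1}\cdot\tilde R^{k-j-1},
\]
since $(p^*L)^j\cdot\tilde R^{k-j-1}$ is an intersection of $k-1$ nef divisors on the $k$-dimensional $\tilde Z$ (hence a nef $1$-cycle) and $\tilde F$ is effective. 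Iterating $k-1$ times gives
\[
p^*L\cdot\tilde R^{k-1}\leq m^{k-1}(p^*L)^k=m^{k-1}L^k=m^{k-1}\Ivol(K_X+B)<m^{k-1}v,
\]
since $L=K_Z+\Delta+M$ is ample on the ample model $Z$.

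For the error term $E_V\cdot R_V^{k-1}$, the plan is to argue that $E_V$ is $\phi$-exceptional, in which case the projection formula yields $E_V\cdot R_V^{k-1}=\phi_*E_V\cdot\tilde R^{k-1}=0$ at once. Unwinding the construction, $E_V$ is characterized by $\psi^*E_V=E_W=B_W-\bar B_W$, and $E_W$ is the pushforward under the relative MMP $\theta\colon X'\dashrightarrow W/\tilde Z$ of the $g$-exceptional $\Qq$-divisor $E'=B'-\bar B'$. Exploiting the birational invariance of the divisorial part of the canonical bundle formula, combined with the crepancy $K_{X'}+\bar B'=g^*(K_X+B)$, one should see that on any prime divisor $P\subset V$ with $\phi(P)$ a divisor in $\tilde Z$ the coefficients of $\Delta_V$ and $\bar\Delta_V$ at $P$ must coincide (both being dictated by the original divisorial part $\Delta$ on $Z$), so that $E_V=\Delta_V-\bar\Delta_V$ is supported on $\phi$-exceptional divisors. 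Combining the two estimates gives $\mathfrak R=m^{k-1}v$ with $m=m(d,\Ii,\ep)$ from Step~2 of the proof of Theorem~\ref{thm: main result}, depending only on $\Ii,\ep,d,v$.

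The hard part of this plan is the rigorous verification of the $\phi$-exceptionality of $E_V$. The subtlety is that the MMP model $W/\tilde Z$ need not recover $X/Z$: the base $\tilde Z$ differs from $Z$ by $p$-exceptional divisors, and the base change $X\times_Z\tilde Z$ may contribute extra horizontal components after normalization, so one must carefully track the canonical bundle formula along the entire chain $W\to V\to\tilde Z\to Z$, comparing the divisorial parts for the two klt-trivial fibrations $(W,B_W)\to V$ and $(W,\bar B_W)\to V$ against the original one for $(X,B)\to Z$.
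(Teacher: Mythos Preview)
Your bound on the main term $\eta^*L\cdot R_V^{k-1}\leq m^{k-1}v$ is correct and is in fact the same telescoping estimate the paper uses (for its $M_V$ term). The gap is in the error term: $E_V$ is \emph{not} $\phi$-exceptional in general, only $\eta$-exceptional, so $E_V\cdot R_V^{k-1}$ need not vanish. Your heuristic breaks down because only $\bar\Delta_V$ is the trace on $V$ of the discriminant b-divisor of the original fibration $(X,B)\to Z$; the divisor $\Delta_V$ is the discriminant of the \emph{different} lc-trivial fibration $(W,B_W)\to V$, and $(W,B_W)$ is not crepant to $(X,B)$. Concretely, let $Q\subset\tilde Z$ be a $p$-exceptional prime divisor over which $\phi$ is an isomorphism. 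The boundary $B'$ carries the artificially inserted coefficient $1-\ep/2$ along the $g$-exceptional divisor lying over $Q$, and since this divisor is pulled back from $\tilde Z$ it is untouched by the MMP over $\tilde Z$; hence $B_W$ still has coefficient $1-\ep/2$ along $\psi^*Q$, forcing $\mathrm{coeff}_Q\Delta_V=1-\ep/2$. On the other hand $\bar B_W$ has a \emph{negative} coefficient there (dictated by the log discrepancy of that divisor over $(X,B)$), so $\mathrm{coeff}_Q\bar\Delta_V<0$ and $\mathrm{coeff}_Q E_V>0$. A product $X=Z\times\mathbb P^1$ already exhibits this. Since $R_V=\phi^*q^*A$ is pulled back from $Z'$ rather than from $Z$, and $p$-exceptional divisors on $\tilde Z$ need not be $q$-exceptional, the intersection $E_V\cdot R_V^{k-1}=q_*(\phi_*E_V)\cdot A^{k-1}$ is positive in general, and your decomposition gives no handle on it. The paper itself flags this issue in the remark immediately following the lemma: the $\eta$-exceptionality of $E_V$ yields only $(K_V+\Delta_V+M_V)\cdot L_V^{k-1}\leq v$, which is not what is needed.

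The paper's proof abandons the decomposition via \eqref{eq: eta} and instead bounds $K_V\cdot R_V^{k-1}$, $\Delta_V\cdot R_V^{k-1}$, and $M_V\cdot R_V^{k-1}$ separately. The first uses the boundedness of $Z'$ (from Step~2) and the projection formula; the second uses that $\Delta_V$ has DCC coefficients, so $\Supp\Delta_V\preceq_{\Rr}\frac{1}{\delta(1-e)}(K_V+\Delta_V+M_V)$ for suitable $\delta,e$, after which \cite[Lemma 3.2]{HMX13} converts the degree into a volume bounded by $\Ivol(K_X+B)$; the third uses b-nefness of the moduli part to compare $M_V$ with $L_V$ via the negativity lemma, and then your telescoping inequality finishes.
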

\begin{proof}
We estimate $K_V \cdot { R_V}^{k-1} , \Delta_V \cdot { R_V}^{k-1}$ and $M_V \cdot { R_V}^{k-1} $ separately.

\medskip

First, we work with $K_V \cdot { R_V}^{k-1}$. Recall that $\ti R$ defines $q: \ti Z \to Z'$, and for the very ample divisor $A$ on $Z'$ such that $\ti R=q^*A$, we have $A^k \leq m^k v$ (see \eqref{eq: bound A^k}). Hence $Z' \subset \Pp^l$ with $l \leq m^kv+k-1$ (for example, see \cite[Proposition 0]{EH87}), and $\Oo_{Z'}(A) \simeq \Oo_{Z'}(1)$. Thus the degree of $Z'$ is bounded above by $m^kv$. Then by the construction of Chow varieties (for example, see \cite[Chapter I.3]{Kol96}), there is a projective morphism between noetherian schemes of finite type $\mathcal U \to \mathcal C$, and a line bundle $\Aa$ on $\mathcal U$ such that for any $Z'$, there is some $t_0$ whose fiber $\mathcal U_{t_0} \simeq Z'$ and $\Aa|_{\mathcal U_{t_0} } \simeq \Oo_{Z'}(A)$. In fact, $\Aa$ is the restriction of ${\rm pr_2}^*\Oo_{\Pp^l}(1)$ to the universal family $\mathcal U \subset \mathcal C \times \Pp^l$. In particular, $K_{Z'} \cdot A^{k-1}$ can only achieve finitely many values and thus be bounded above. By projection formula (\cite[Chapter VI, Proposition 2.11]{Kol96}), $K_V \cdot R_V^{k-1} = K_{Z'} \cdot A^{k-1}$ which is also bounded above depending only on $\Ii, \ep, d, v$.

\medskip

Next, we work with $\Delta_V \cdot { R_V}^{k-1}$. Notice that $B_W \in \Ii \cup \{1-\frac \ep 2\}$ which is a DCC set. By ACC for log canonical thresholds \cite[Theorem 1.1]{HMX14}, $\Delta_V$ belongs to a DCC set depending on $\Ii, \ep, d$. In particular, each coefficient of $\Delta_V$ is greater than some $\delta=\delta(\Ii, \ep,d)\in \Qq_{>0}$. By \cite[Theorem 8.1]{BZ16} (this is \cite[Lemma 7.3]{HMX14} for the generalized polarized pairs), and use the same argument as Step 2 in the proof of Theorem \ref{thm: main result} (i.e. take a smooth model $\bar V \to V$ such that the moduli part is a nef $\Qq$-Cartier divisor, and use the boundedness of general fibers to bound the Betti numbers), there exists $e=e(\ep, \Ii, d)\in (0,1) \cap \Qq$ such that $K_V+e\Delta_V+eM_V$ is still big. Because $M_V$ is pseudo-effective,
\[
(1-e)\Delta_V \preceq_{\Rr} (1-e)\Delta_V+(K_V+e\Delta_V+M_V) = K_V+\Delta_V+M_V.
\] By \cite[Lemma 3.2]{HMX13}, if $H=2(2k+1)R_V$, then
\[
\Supp \Delta_V \cdot H^{k-1} \leq 2^k \vol(V, K_V+\Supp \Delta_V+H).
\] Hence, it is enough to show that $\vol(V, K_V+\Supp \Delta_V+H)$ has an upper bound which depends only on $\Ii, \ep, d, v$. Because coefficients of $\Delta_V$ are greater than $\delta$, 
\begin{equation}\label{eq: bound Delta}
\Supp \Delta_V \leq \frac 1 \delta \Delta_V \preceq_\Rr \frac{1}{\delta(1-e)} (K_V+\Delta_V+M_V).
\end{equation} By construction, $R_V \preceq_\Rr mL_V$, hence $H=2(2k+1) R_V \preceq_{\Rr} 2(2k+1)m L_V$.  Then by \eqref{eq: bound Delta},
\[
\begin{split}
&\vol(V, K_V+\Supp \Delta_V+H) \\
 \leq & \vol(V, K_V+\frac{1}{\delta(1-e)} (K_V+\Delta_V+M_V)+H)\\
 \leq & \vol(V, K_V+\Delta_V+M_V+\frac{1}{\delta(1-e)} (K_V+\Delta_V+M_V)+2(2k+1)m L_V)\\
  \leq & \vol(V, (1+\frac{1}{\delta(1-e)})(L_V+E_V)+2(2k+1)m L_V)\\
   \leq &  \vol(V, (1+\frac{1}{\delta(1-e)}+2(2k+1)m)(L_V+E_V)), 
\end{split}
\] where the third inequality uses \eqref{eq: eta}. By construction, 
\[
\begin{split}
 \vol(V, L_V+E_V) &=  \vol(V, K_V+\Delta_V+M_V) =\Ivol(W, K_W+B_W) \\
 &= \Ivol(X', K_{X'}+B')= \Ivol(X, K_{X}+B) \leq v,
 \end{split}
\] and thus $\vol(V, K_V+\Supp \Delta_V+H)$ has an upper bound which depends only on $\Ii, \ep, d, v$.

\medskip

Finally, we bound $M_V \cdot { R_V}^{k-1}$. By the above discussion, $M_V$ is the moduli part for both $(W, \bar B_W) \to V$ and $(W, B_W) \to V$, hence $\eta_*M_V = M$. Because $(X, B) \to Z$ is a klt-trivial fibration, there is a klt pair $(Z, \Delta_Z)$ such that $K_X+B \sim_\Qq f^*(K_Z+\Delta_Z)$ (\cite[Theorem 0.2]{Amb05}). Thus there is a small $\Qq$-factorialization $Z^q \to Z$. Moreover, $p$ can be assumed to factor through $Z^q$, and let $p': \ti Z \to Z^q$ be the corresponding morphism. Let $\Delta^q, M^q$ be the strict transforms of $\Delta, M$. We claim that $(p'\circ\phi)^*M^q = M_V+F_V$, where $F_V$ is effective. In fact, because the moduli b-divisor $\mathbf M$ is b-nef, there is a birational model $\bar\phi: \bar V \to V$ such that the trace $\mathbf M_{\bar V}$ is nef. By the negativity lemma, there exists an effective divisor $F_{\bar V}$ such that $\mathbf M_{\bar V}+F_{\bar V} = (p'\circ\phi\circ\bar\phi)^*M^q$. Thus $M_V + \bar\phi_*F_{\bar V} = \bar\phi_*(\mathbf M_{\bar V}+F_{\bar V})=(p'\circ\phi)^*M^q$, where $F_V = \bar\phi_*F_{\bar V}  \geq 0$. By the same argument as before, there exists $e'=e'(\ep, \Ii, d)\in (0,1) \cap \Qq$ such that $K_{Z^q}+e'\Delta^q+e'M^q$ is big. Hence
\begin{equation}\label{eq: bound M}
\begin{split}
&(1-e')M_V \preceq_\Rr (1-e')(p'\circ\phi)^*M^q \\
\preceq_\Rr &(p'\circ\phi)^*(K_{Z^q}+\Delta^q+e'M^q+(1-e')M^q)=L_V.
\end{split}
\end{equation} Because $L_V, R_V$ are nef, and $R_V \preceq_\Rr mL_V$, we have
\[
L_V\cdot R_V^{k-1} \leq L_V\cdot (mL_V)\cdot R_V^{k-2} \leq \cdots \leq L_V \cdot (mL_V)^{k-1} \leq m^{k-1}v.
\] Thus by \eqref{eq: bound M},
\[
M_V \cdot R_V^{k-1} \leq \frac{m^{k-1}v}{(1-e')}.
\]

Put the above estimates together and notice that $m=m(d, \Ii, \ep)$ (see Step 2 in the proof of Theorem \ref{thm: main result}), we have $\mathfrak R = \mathfrak R (\Ii, \ep, d, v) \in \Qq_{>0}$ such that $(K_V+\Delta_V+M_V) \cdot { R_V}^{k-1} < \mathfrak R$.
\end{proof}

\begin{remark}
(1) One cannot apply the method for estimating $M_V \cdot R_V^{k-1}$ to $B_V \cdot R_V^{k-1}$. The reason is that we do not have a bounded $\zeta = \zeta(\Ii, \ep, d, v) \in \Rr_{>0}$ such that $B_V \leq \zeta (p'\circ\phi)^*B^q$.

(2) One can show that $E_V$ in \eqref{eq: eta} is $\eta$-exceptional. Hence
\[
(K_V+\Delta_V+M_V)\cdot L_V^{k-1} = (K_Z+\Delta+M)\cdot L^{k-1} \leq v.
\] However, such estimate breaks in the induction argument of Proposition \ref{prop: key} (see \eqref{eq: induction bound}). Hence, we have to use the above more complicated estimate. 
\end{remark}

In the proof of Proposition \ref{prop: key}, we use the following lemma.

\begin{lemma}[{\cite[Lemma 2.5]{Jia18}}]\label{le: jiang}
Let $X$ be a projective normal variety and let $D$ be an $\Rr$-Cartier $\Rr$-divisor on $X$. Let $S$ be a base point free Cartier prime divisor on $X$. Then for any rational number $q > 0$,
\[
\vol(X,D+qS) \leq \vol(X,D)+q \cdot \dim X \cdot \vol(S,D|_S +qS|_S).
\]
\end{lemma}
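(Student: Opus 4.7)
The plan is to run the standard ``restriction to a divisor'' argument, comparing $h^0$ on $X$ of $\lfloor mD\rfloor+mqS$ to that of $\lfloor mD\rfloor$ through a telescoping sum of section dimensions on $S$. First I would pick a sequence of $m$'s with $mq\in\Zz$ and, for each $j=1,\dots,mq$, tensor the short exact sequence $0\to\Oo_X(-S)\to\Oo_X\to\Oo_S\to 0$ by $\Oo_X(\lfloor mD\rfloor+jS)$ to get
\[
h^0(X,\lfloor mD\rfloor+jS)\le h^0(X,\lfloor mD\rfloor+(j-1)S)+h^0\bigl(S,(\lfloor mD\rfloor+jS)|_S\bigr).
\]

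Iterating telescopes this to
\[
h^0(X,\lfloor mD\rfloor+mqS)\le h^0(X,\lfloor mD\rfloor)+\sum_{j=1}^{mq}h^0\bigl(S,(\lfloor mD\rfloor+jS)|_S\bigr),
\]
and the hypothesis that $S$ is base point free enters exactly here: a general member $S'\in|S|$ differs from $S$, so $S|_S$ is linearly equivalent to the effective divisor $S'\cap S$ on $S$. Multiplication by the corresponding section (to a suitable power) embeds each term of the sum into $h^0(S,(\lfloor mD\rfloor+mqS)|_S)$, bounding the entire sum by $mq\cdot h^0(S,(\lfloor mD\rfloor+mqS)|_S)$.

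To finish, I would divide through by $m^{\dim X}/(\dim X)!$ and take $\limsup_{m\to\infty}$: the first summand contributes $\vol(X,D)$, while the second contributes $q\cdot\dim X\cdot\vol(S,D|_S+qS|_S)$, the factor $\dim X$ appearing because $(\dim X)!/(\dim S)!=\dim X$ absorbs the codimension shift. The passage from $\Qq$-Cartier to $\Rr$-Cartier $D$ (and from rational to real $q$, if needed) I would dispatch by continuity of the volume function on the big cone and extension by zero elsewhere; this is routine. The only genuinely load-bearing point is the effectivity of $S|_S$, which is precisely the content of the base point freeness assumption, so there is no serious obstacle beyond being careful to keep track of the arithmetic of the exponents when dividing by $m^{\dim X}$ and taking $\limsup$.
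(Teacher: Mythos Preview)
The paper does not supply its own proof of this lemma; it simply quotes the statement from \cite[Lemma 2.5]{Jia18}. Your telescoping restriction argument is exactly the standard proof (and is the one given in Jiang's paper): the short exact sequence for $S$, the bound $h^0(S,(\lfloor mD\rfloor+jS)|_S)\le h^0(S,(\lfloor mD\rfloor+mqS)|_S)$ coming from the effectivity of $S|_S$, and the passage to volumes via $m^{\dim X}/(\dim X)!$ are all correct, as is the reduction to $\Qq$-Cartier $D$ by continuity of $\vol$.
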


\begin{lemma}\label{le: for key prop}
Let $d\in \Nn$, $\ep, \sigma, \nu, l \in \Qq_{>0}$ be fixed numbers. Let $\Ii \subset [0,1] \cap \Qq$ be a DCC set. Suppose that $\psi: (W, B_W) \to V$ is an $\ep$-lc Fano type fibration between projective normal varieties, and that there is a big and base point free Cartier divisor $R_V$ on $V$ such that they satisfy the following properties:
\begin{enumerate}
\item $\dim W =d$ and $B_W \in \Ii$,
\item $R_V^{\dim V} \leq \sigma$, and
\item in the canonical bundle formula $K_W+B_W\sim_\Qq \psi^*(K_V+\Delta_V+M_V)$, $(K_V+\Delta_V+M_V)\cdot R_V^{\dim V-1}<\nu$.
\end{enumerate} 
Then there exists $\mathfrak B = \mathfrak B(d, \ep, \sigma,\nu, l, \Ii) \in \Qq_{>0}$ such that
\[
\vol(K_W+2B_W+l\psi^*R_V) \leq \mathfrak B.
\]
\end{lemma}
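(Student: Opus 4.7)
I will prove the lemma by induction on $k=\dim V$, cutting $W$ along the pullback of a general member of $|R_V|$ via Jiang's Lemma (Lemma~\ref{le: jiang}). In the base case $k=0$, $V$ is a point and $\psi^{*}R_V=0$, so $(W,B_W)$ is an $\ep$-lc Fano variety with $B_W\in\Ii$; by BAB such pairs form a bounded family determined by $d,\ep,\Ii$, and $\vol(K_W+2B_W)=(-K_W)^{d}$ is uniformly bounded, giving $\mathfrak B$ in this case.

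For the inductive step $k\geq 1$, I take a general $S\in|R_V|$. By Bertini $S$ is a prime Cartier divisor on $V$, and since $\psi$ is a contraction $T:=\psi^{*}S$ is a prime Cartier base point free divisor on $W$. Jiang's Lemma yields
\[
\vol(W,K_W+2B_W+l\psi^{*}R_V)\leq\vol(W,K_W+2B_W)+ld\cdot\vol\bigl(T,(K_W+2B_W+lT)|_T\bigr).
\]
By adjunction on $W$, the restricted divisor equals $K_T+2B_T+(l-1)\psi_T^{*}(R_V|_S)$, where $\psi_T:=\psi|_T$ and $B_T:=B_W|_T$. I will check that $\psi_T\colon(T,B_T)\to S$ is again an $\ep$-lc Fano type fibration: the $\ep$-lc property from Bertini applied to the base point free system $|T|$, $K_T+B_T\sim_\Qq 0/S$ from adjunction and $K_W+B_W\sim_\Qq 0/V$, and bigness of $-K_T/S$ because general fibers of $\psi_T$ agree with general fibers of $\psi$. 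The parameters update to $\sigma'=(R_V|_S)^{k-1}=R_V^k\leq\sigma$ and, by adjunction for the canonical bundle formula, to $\nu'=\nu+\sigma$ (using $K_S+\Delta_S+M_S=(K_V+S+\Delta_V+M_V)|_S$ up to $\Qq$-linear equivalence, as already exploited in Lemma~\ref{le: key}). When $l\leq 1$ the coefficient of the pullback becomes nonpositive and I invoke monotonicity of volume. The inductive hypothesis then bounds the second Jiang term.

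The main obstacle will be bounding the first term $\vol(W,K_W+2B_W)$, which is the $l=0$ shadow of the statement and does not itself reduce $\dim V$. By the canonical bundle formula $K_W+2B_W\sim_\Qq \psi^{*}L_V+B_W$ where $L_V=K_V+\Delta_V+M_V$, and on a general fiber $F$ of $\psi$ one has $(\psi^{*}L_V+B_W)|_F\sim_\Qq B_W|_F\sim_\Qq-K_F$ with bounded volume by BAB. I plan to combine this fiberwise control with the birational boundedness of $V$: the big base point free divisor $R_V$ with $R_V^k\leq\sigma$ defines a birational morphism onto an ample model $V''\subset\Pp^N$ of bounded degree, and after possibly replacing $V$ by $V''$ the hypotheses of Birkar's boundedness theorem for $(d,r,\ep)$-Fano type fibrations (Theorem~\ref{thm: Birkar CY fibration}) are satisfied with $r$ bounded in terms of $(d,\ep,\sigma,\nu,\Ii)$. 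The resulting log boundedness of $(W,B_W)$ yields a uniform bound on $\vol(W,K_W+2B_W)$, and summing both estimates produces the required $\mathfrak B(d,\ep,\sigma,\nu,l,\Ii)$.
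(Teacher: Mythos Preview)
Your inductive framework via Jiang's Lemma is the same as the paper's, and your verification that the restricted fibration $(T,B_T)\to S$ is again $\ep$-lc Fano type with parameters $(d-1,\ep,\sigma,\nu+\sigma,\Ii)$ is correct. The gap is in your treatment of the first Jiang term $\vol(W,K_W+2B_W)$. Your plan to invoke Theorem~\ref{thm: Birkar CY fibration} does not go through: a $(d,r,\ep)$-Fano type fibration requires a \emph{very ample} divisor $A$ on the base with $A^{\dim V}\leq r$ and $A-L_V$ ample, whereas you only have $R_V$ big and base point free, and $V$ need not carry any very ample divisor of bounded degree. Passing to the image $V''$ of $|R_V|$ does not help either: $K_W+B_W\sim_\Qq\psi^*L_V$ need not descend to $V''$, so condition (2) of Definition~\ref{def: (d,r,ep)-Fano type fibration} fails, and the intersection bound $L_V\cdot R_V^{k-1}<\nu$ does not produce $A-L''$ ample. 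In fact this obstruction is exactly why the paper introduces the intermediate model $V$ in the first place (see the paragraph preceding Lemma~\ref{le: key}): the bounded target $Z'$ does \emph{not} receive an $\ep$-lc Fano type fibration, so you cannot simply reverse that construction.

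The paper handles the first term by applying Jiang's Lemma with a \emph{large} parameter $q$ rather than $q=l$, chosen so that $\vol(W,K_W+2B_W+(l-q)\psi^*R_V)=0$. The existence of such a bounded $q=q(d,\ep,\sigma,\nu,l,\Ii)$ is the core of the argument and is where hypotheses (2) and (3) are genuinely used: if $0\leq\Theta\sim_\Qq K_W+2B_W+(l-q)\psi^*R_V$, then $\Theta|_F\sim_\Qq -K_F$ on a general fiber, so \cite[Theorem~1.6]{Bir16BAB} gives a uniform $\tau>0$ with $(F,B_W|_F+\tau\Theta|_F)$ klt; applying the canonical bundle formula to $(W,(1-\tau)B_W+\tau\Theta)$ over $V$ and intersecting with $R_V^{k-1}$ then forces $q$ to be bounded in terms of $\nu,\sigma,\tau,d,l$. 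Your scheme never exploits hypothesis~(3) in controlling the first term. A minor additional issue: your Bertini claim that a general $S\in|R_V|$ is prime fails when $k=1$ (then $S$ is a disjoint union of at most $\sigma$ points), which is why the paper treats $\dim V=1$ separately.
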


\begin{proof}
We prove the lemma by induction on $\dim V$. First, we show the following claim.

\begin{claim*}\label{claim}
There exists $q=q(d, \ep, \sigma,\nu, l,\Ii) \in \Qq_{>0}$ such that $$ \vol(W, K_W+2B_W+l\psi^*R_V-q\psi^*R_V)=0.$$
\end{claim*}
\begin{proof}[Proof of the Claim]
This can be shown as \cite[Proposition 5.8]{Bir18}. In fact, suppose that there exists 
\[
0\leq \Theta \sim_\Qq K_W+2B_W+l\psi^*R_V-q\psi^*R_V,
\] then for a general fiber $F_\psi$ of $\psi: W \to V$, $$\Theta|_{F_\psi} \sim_\Qq K_{F_\psi}+2B_W|_{F_\psi} \sim_\Qq -K_{F_\psi}.$$ Hence by \cite[Theorem 1.6]{Bir16BAB}, there is $1>\tau=\tau(\ep,d)\in \Qq_{>0}$ such that $(F_\psi, B_W|_{F_\psi}+\tau \Theta|_{F_\psi})$ is klt. By inversion of adjunction, $(W, B_W+\tau \Theta)$ is klt over the generic point of $V$. Then
\[
K_W+(1-\tau)B_W+\tau \Theta = (1+\tau)(K_W+B_W+\frac{\tau(l-q)}{1+\tau}\psi^*R_V)\sim_\Qq 0/V.
\] By the canonical bundle formula,
\[
K_W+(1-\tau)B_W+\tau \Theta \sim_\Qq \psi^*(K_V+\Delta'_V+M_V')
\] with $\Delta'_V \geq 0$ and $M_V'$ pseudo-effective (\cite[Theorem 3.6]{Bir19}). It is well known that $K_V+(\dim V+2)R_V$ is big (for example, see \cite[Proposition 3.3]{Li18}). By
\[
\begin{split}
&(1+\tau)(K_W+B_W+\frac{\tau(l-q)}{1+\tau}\psi^*R_V) \\
\sim_\Qq& (1+\tau)\psi^*(K_V+\Delta_V+M_V+\frac{\tau(l-q)}{1+\tau}R_V)\\
\sim_\Qq&\psi^*(K_V+\Delta'_V+M_V'),
\end{split}
\] then $(1+\tau)(K_V+\Delta_V+M_V+\frac{\tau(l-q)}{1+\tau}R_V) + (d+2)R_V$ is big. Thus 
\begin{equation}\label{eq: q}
\begin{split}
0&\leq \big((1+\tau)(K_V+\Delta_V+M_V+\frac{\tau(l-q)}{1+\tau}R_V) + (d+2)R_V\big) \cdot R_V^{\dim V-1}\\
&= (1+\tau)(K_V+\Delta_V+M_V)\cdot R_V^{\dim V-1}+(\tau(l-q)+(d+2))\cdot R_V^{\dim V}\\
&\leq (1+\tau)\nu + (\tau(l-q)+(d+2))\cdot R_V^{\dim V}.
\end{split}
\end{equation}
Because $R_V$ is a nef Cartier divisor which is big, $R_V^{\dim V} \geq 1$. Then \eqref{eq: q} implies that when $q>l$, 
\[
0 \leq (1+\tau)\nu+(d+2) R_V^{\dim V}+\tau(l-q).
\] By $R_V^{\dim V} \leq \sigma$, we have
\[
q \leq \frac{(1+\tau)\nu+(d+2)\sigma}{\tau}+l.
\] Hence, if we choose $q=q(d, \ep, \sigma,\nu, l,\Ii) = \frac{(1+\tau)\nu+(d+2)\sigma}{\tau}+l+1$, then $ \vol(W, K_W+2B_W+l\psi^*R_V-q\psi^*R_V)=0$. 
\end{proof}

If $\dim V=0$, then 
\[
\vol(W, K_W+2B_W+l\psi^*R_V) = \vol(W, B_W)=\vol(W, -K_W).
\] Because $(W, B_W)$ is $\ep$-lc and $B_W$ is big, $W$ belongs to a bounded family which only depends on $\ep$ and $d$ (\cite[Corollary 1.2]{Bir16BAB}). Hence $\vol(W, -K_W)$ is bounded above.

\medskip

If $\dim V=1$, then take a general $S \in |\psi^*R_V|$. Because $\deg_V R_V \leq \sigma$, $S = \sqcup_{i=1}^\zeta S_i$ with $\zeta \leq \sigma$. Because $S$ is a disjoint union of prime divisors, Lemma \ref{le: jiang} still holds by the same argument. Thus, for any $q \in \Qq_{>0}$,
\begin{equation}\label{eq: dim 1}
\begin{split}
\vol(W, K_W+2B_W+l\psi^*R_V) \leq& \vol(W, K_W+2B_W+l\psi^*R_V-q\psi^*R_V)\\
&+qd\vol(S, (K_W+2B_W+l\psi^*R_V)|_{S}).
\end{split}
\end{equation} 
Taking $q=q(d, \ep,\sigma,\nu,l,\Ii)$ as in the above claim, by \eqref{eq: dim 1}, we have
\[
\begin{split}
\vol(W, K_W+2B_W+l\psi^*R_V) &\leq qd\vol(S, (K_W+2B_W+l\psi^*R_V)|_{S})\\ 
&\leq qd\sum_{i=1}^\zeta  \vol(S_i, K_{S_i}+2B_W|_{S_i})\\
&= qd\sum_{i=1}^\zeta  \vol(S_i, -K_{S_i}).
\end{split}
\] $\vol(S_i, -K_{S_i})$ is bounded above because $S_i$ belongs to a bounded family (see the $\dim V=0$ case). Then by $\zeta \leq \sigma$, we get the desired result.

\medskip

Finally, when $\dim V \geq 2$, choose a general $V_1 \in |R_V|$. $V_1$ is irreducible because $R_V$ defines a morphism to a variety with dimension $\geq 2$. Let $W_1= \psi^*V_1$ which is an irreducible divisor such that $(W, W_1+B_W)$ is plt, in particular, $W_1$ is normal. By adjunction, $K_{W_1}+B_{W_1} = (K_W+W_1+B_W)|_{W_1}$, $(W_1, B_{W_1})$ is still $\ep$-lc by inversion of adjunction (for example, see \cite[Corollary 1.4.5]{BCHM10}). Because $W_1$ is a general Cartier divisor, $B_{W_1}=B_W|_{W_1} \in \Ii$ (see \cite[Corollary 16.7]{Fli92}). Let $R_{V_1}=R_V|_{V_1}$ which is still big and base point free. Let $\psi_1: W_1 \to V_1$ be the induced morphism. Notice that a general fiber of $\psi_1$ is also a general fiber of $\psi$, hence $W_1$ is Fano type over $V_1$. Thus $\psi_1: W_1 \to V_1$ is an $\ep$-lc Fano type fibration. By canonical bundle formula, 
\[
K_{W_1}+B_{W_1} \sim_\Qq \psi_1^*(K_{V_1}+\Delta_{V_1}+M_{V_1})\sim_\Qq \psi_1^*((K_V+\Delta_V+M_V+R_V)|_{V_1}).
\] Hence, 
\begin{equation}\label{eq: induction bound}
\begin{split}
&(K_{V_1}+\Delta_{V_1}+M_{V_1})\cdot R_{V_1}^{\dim V_1 -1} \\
=&  (K_V+\Delta_V+M_V+R_V)\cdot V_1 \cdot R_{V}^{\dim V_1 -1}\\
= & (K_V+\Delta_V+M_V)\cdot R_{V}^{\dim V - 1} + R_{V}^{\dim V}\\
\leq & \nu + \sigma.
\end{split} 
\end{equation}

In summary, $\psi_1: (W_1, B_{W_1}) \to V_1$ satisfies the assumptions of Lemma \ref{le: for key prop} with $d, \ep,\sigma, \nu, \Ii$ replaced by $d-1, \ep, \sigma, \sigma+\nu, \Ii$. Hence, by the induction hypothesis, there exists $\mathfrak B_1 = \mathfrak B_1(d, \ep, \sigma,\nu, l, \Ii) \in \Qq_{>0}$ such that 
\[
\vol(W_1, K_{W_1}+2B_{W_1}+l\psi_1^*R_{V_1}) \leq \mathfrak B_1.
\] By Lemma \ref{le: jiang}, for any $q \in \Qq_{>0}$, we have
\[
\begin{split}
\vol(W, K_W+2B_W+l\psi^*R_V) \leq& \vol(W, K_W+2B_W+l\psi^*R_V-q\psi^*R_V)\\
&+qd\vol(W_1, K_{W_1}+2B_{W_1}+l\psi_1^*R_{V_1}).
\end{split}
\] Take $q=q(d, \ep,\sigma,\nu,l,\Ii) \in \Qq_{>0}$ as in the above claim. Then
\[
\vol(W, K_W+2B_W+l\psi^*R_V) \leq qd \mathfrak B_1,
\] and thus the induction step is completed.
\end{proof}

\begin{proposition}\label{prop: key}
Under the notation in the proof of Theorem \ref{thm: main result}, for any $l \in \Nn$, there exists $N=N(\Ii, \ep, d, v, l) \in \Rr_{>0}$ such that
\[
\vol(Y, K_Y+D_Y+lh^*A) < N.
\]
\end{proposition}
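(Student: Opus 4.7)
The plan is to reduce the bound on $\vol(Y, K_Y + D_Y + l h^*A)$ to an application of Lemma \ref{le: for key prop} for $\psi\colon (W, B_W) \to V$, via a sequence of three volume comparisons. First, since $(Y, D_Y)/Z'$ is a minimal model of $(X', D')/Z'$, on a common log resolution $r\colon X'' \to X'$, $t\colon X'' \to Y$ compatible over $Z'$ one has $r^*(K_{X'} + D') = t^*(K_Y + D_Y) + E_\theta$ with $E_\theta \geq 0$ and $t$-exceptional; the projection formula (using $t_*\Oo(mE_\theta) = \Oo_Y$) then yields the equality $\vol(Y, K_Y + D_Y + l h^* A) = \vol(X', K_{X'} + D' + l f'^* A)$. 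Second, from \eqref{eq: D'} and $B' = g_*^{-1}B + (1-\ep/2)\Exc(g)$ one directly checks that $D' \leq (1+\ep/2) B' \leq 2 B'$, hence $\vol(X', K_{X'} + D' + l f'^* A) \leq \vol(X', K_{X'} + 2 B' + l f'^* A)$.

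The heart of the argument is the third comparison: $\vol(X', K_{X'} + 2 B' + l f'^* A) \leq \vol(W, K_W + 2 B_W + l \psi^* R_V)$. Choose a common log resolution $X''$ of $X'$ and $W$ over $\tilde Z$, with $r\colon X'' \to X'$ and $t\colon X'' \to W$, so that $r^* f'^* A = t^* \psi^* R_V$. Since $(W, B_W)/\tilde Z$ is a good minimal model of $(X', B')/\tilde Z$, we have $r^*(K_{X'} + B') = t^*(K_W + B_W) + F$ with $F \geq 0$ and $t$-exceptional. Setting $\mathfrak H := F + r^* B' - t^* B_W$, we obtain
\[
r^*(K_{X'} + 2 B' + l f'^* A) = t^*(K_W + 2 B_W + l \psi^* R_V) + \mathfrak H.
\]
The key observation is that $\mathfrak H$ is $t$-exceptional. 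Indeed, the induced birational map $\phi\colon X' \dto W$ is a birational contraction (being built from an MMP), so every prime divisor on $W$ is the strict transform of a divisor on $X'$; consequently, any prime divisor of $X''$ non-exceptional for $t$ is also non-exceptional for $r$, and on such a divisor $F = 0$ while $r^* B' = t^* B_W$ because $B_W = \phi_* B'$. Although $\mathfrak H$ may fail to be effective, for any sufficiently divisible $m$ the sheaf inclusion $\Oo_{X''}(-m\mathfrak H^-) \subseteq \Oo_{X''}$ (coming from $m\mathfrak H^- \geq 0$) together with the projection formula applied to the $t$-exceptional effective divisor $m\mathfrak H^+$ yields
\[
H^0(X'', m(t^*M + \mathfrak H)) \subseteq H^0(X'', m(t^*M + \mathfrak H^+)) = H^0(W, mM),
\]
with $M := K_W + 2 B_W + l \psi^* R_V$. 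Taking $\limsup$ over $m$ gives the claimed inequality.

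It remains to apply Lemma \ref{le: for key prop} to $\psi\colon (W, B_W) \to V$. By Step 2 of the proof of Theorem \ref{thm: main result}, $R_V^{\dim V} \leq A^{\dim Z'} \leq m^k v$; by Lemma \ref{le: key}, $(K_V + \Delta_V + M_V) \cdot R_V^{\dim V-1} < \mathfrak R$; and $\psi$ is an $\ep/2$-lc Fano type fibration with $B_W$ having coefficients in $\Ii \cup \{1-\ep/2\}$, a DCC set depending only on $\Ii$ and $\ep$. Lemma \ref{le: for key prop} then produces a bound $\vol(W, K_W + 2 B_W + l \psi^* R_V) \leq N$ for some $N = N(\Ii, \ep, d, v, l) \in \Rr_{>0}$, and chaining the three comparisons gives the proposition. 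The chief obstacle is the third comparison — in particular, the verification that $\mathfrak H$ is $t$-exceptional, which rests on $\phi\colon X' \dto W$ being a birational contraction so that no divisor on $W$ is ``new'' relative to $X'$.
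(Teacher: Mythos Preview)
Your proof is correct and follows essentially the same route as the paper. Both arguments reduce to Lemma~\ref{le: for key prop} via the chain
\[
\vol(Y,K_Y+D_Y+lh^*A)=\vol(X',K_{X'}+D'+lf'^*A)\leq\vol(X',K_{X'}+2B'+lf'^*A)\leq\vol(W,K_W+2B_W+l\psi^*R_V),
\]
and then invoke Lemma~\ref{le: key} together with the bound $R_V^{\dim V}\leq m^k v$ to feed Lemma~\ref{le: for key prop}. The only difference is cosmetic: the paper justifies the last inequality in one line by observing $K_W+2B_W=\pr_{2*}\pr_1^*(K_{X'}+2B')$ (i.e.\ pushforward along a birational contraction can only increase volume), whereas you unpack this by writing the discrepancy $\mathfrak H$ explicitly, checking it is $t$-exceptional, and handling its possibly negative part via $H^0$-inclusions. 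Your added detail is sound and makes the step more transparent; the overall strategy is identical.
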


\begin{proof}
Because $(Y, D_Y)$ is a minimal model of $(X', D')/Z'$, $$\vol(Y, K_Y+D_Y+lh^*A) = \vol(X', K_{X'}+D'+lf'^*A).$$ Because $D'= (1+\ep/2)g_*^{-1}B+(1-\ep/2) \Exc(g), B' = g_*^{-1}B+(1-\ep/2) \Exc(g)$ and $\ep<1$, we have $D' \leq 2B'$. Hence $\vol(Y, K_Y+D_Y+lh^*A) \leq \vol(X', K_{X'}+2B'+lf'^*A)$. Recall that $\ti R = q^*A$, and because $K_W+2B_W={\pr_{2*}}\pr_1^*(K_{X'}+2B')$ where $X'\xleftarrow{\pr_1} X'' \xrightarrow{\pr_2} W$ is a common resolution, we have
\[
\vol(X', K_{X'}+2B'+lf'^*A) \leq \vol(W, K_W+2B_W+l\psi^*R_V).
\] Hence, it is enough to bound $\vol(W, K_W+2B_W+l\psi^*R_V)$. Notice that $R_V^{\dim V} \leq (mL_V)^{\dim V} \leq m^{\dim V}v$ where $m=m(\Ii,\ep,d,v)$, and $(K_V+B_V+M_V)\cdot R_V^{\dim V-1}<\mathfrak  R(\Ii, \ep, d, v)$ by Lemma \ref{le: key}. Applying Lemma \ref{le: for key prop} to $\psi: (W, B_W) \to V$, and taking $d, \ep,\sigma, \nu, l, \Ii$ to be $d, \ep, m^{\dim V}v, \mathfrak R, l, \Ii$ respectively, we have an upper bound for $\vol(W, K_W+2B_W+l\psi^*R_V)$  which depends only on $\Ii, \ep, d, v, l$.
\end{proof}

\begin{proof}[Proof of Corollary \ref{cor: bounded of the whole family}]
We show that $(X, B) \in \Ss(d,\ep, v,\Ii)$ is a $(d,r,\ep)$-Fano type fibration for some $d, r,\ep$. This certainly holds when $\dim Z=0$. Hence, we can assume $\dim Z>0$.

By the definition of $\Ss(d,\ep, v,\Ii)$, $(X, B) \to Z$ satisfies $(1), (2), (3)$ in Definition \ref{def: (d,r,ep)-Fano type fibration} with exactly the same $d$ and $\ep$. Now, we show (4) and (5). 

By the argument for the boundedness of $Z$ (see proof of Theorem \ref{thm: main result} Step 5), there are finitely many projective morphisms $\Xx^i \to \Tt$ and divisors $\Bb^{i'}$ on $\Xx^i$ whose coefficients vary in $[\delta, (1-\ep)]$ (but $\Supp \Bb^{i'}$ is fixed) satisfying the following properties: 

(1) there are finitely many birational maps $\varphi^i_{j}: \Xx^i \dasharrow \Yy^i_j/\Tt$ such that if $\varphi: (\Xx^i, \Bb^{i'}) \dasharrow (\Yy', \Bb'_{\Yy'})/\Tt$ is a log terminal model of $(\Xx^i, \Bb^{i'})/\Tt$, then there is a $j$ satisfying $\varphi^i_j=\varphi$ (this can be obtained by the same argument as Theorem \ref{thm: main result} Step 5 using \cite[Corollary 1.1.5 (1)]{BCHM10} instead of \cite[Corollary 1.1.5 (2)]{BCHM10}). 

(2) there are finitely many $\phi^i_{jk}: \Yy^{i}_j \to \mathcal Z_{jk}^i/\Tt$ which is a morphism to the ample model of some $(\Yy^{i}_j, \Bb^{i'}_{\Yy^i_j})/\Tt$. Hence it is also the ample model of some $(\Xx^{i}, \Bb^{i'})/\Tt$. Notice that $\phi^i_{jk} \circ \varphi_j^i: \Xx^i \dasharrow {\mathcal Z}_{jk}^i$ is exactly $\psi_i: \Xx \dasharrow \mathcal Z_j/\Tt$ in the proof of Theorem \ref{thm: main result} Step 5. 

For simplicity, we fix some $\Xx^i, \Yy^i_j, \mathcal Z^i_{jk}$, etc. and denote them by $\Xx, \Yy, \mathcal Z$, etc. By the construction of $\Bb'_{\Yy}$, it belongs to a rational polytope of divisors $\bar{\mathcal{P}}$ such that for any $\Dd \in \bar{\mathcal{P}}$, $K_{\Yy}+\Dd \sim_\Qq 0/\mathcal Z$ (this is \cite[Corollary 1.1.5 (2) (3)]{BCHM10} with $\bar \Aa_j$ replaced by $\bar{\mathcal{P}}$). Suppose that $\Dd^{(s)}$ corresponds to a vertex of $\bar{\mathcal{P}}$, and assume $K_{\Yy}+\Dd^{(s)} \sim_\Qq\phi^*\Ll^{(s)}$, where $\Ll^{(s)}$ is a divisor on $\mathcal Z$. Take a very ample divisor $\mathcal A$ on $\mathcal Z$ such that $\Aa - \Ll^{(s)}$ is ample for each $s$. Then for each $\Dd \in \bar{\mathcal{P}}$, $\Dd = \sum_s a^{(s)} \Dd^{(s)}$ with $a^{(s)} \geq 0$ and $\sum_s a^{(s)} =1$. Hence $K_{\Yy}+\Dd \sim_\Qq \phi^*(\sum_s a^{(s)} \Ll^{(s)})$, and $\Aa -(\sum_s a^{(s)} \Ll^{(s)})$ is still ample. Now suppose that $f: (X, B) \to Z$ is the Iitaka fibration in the definition of $\Ss(d, \ep,v,\Ii)$ with $K_X+B \sim_\Qq f^*L$. If $Z \simeq \mathcal Z_{t}$ for some $t \in \Tt$, then $B \sim_\Qq \Dd_t$ for some $\Dd \in \bar{\mathcal P}$. Hence $\Aa_t - L$ is ample. 

Because $\Aa_t^{\dim Z}$ can only achieve finitely many values, we can take $r$ to be the maximal value. Then for such $Z$, $A=\Aa_t$ and $r$ satisfy the requirements of (4) and (5) in Definition \ref{def: (d,r,ep)-Fano type fibration}. Because there are finitely many $\Xx, \Yy, \mathcal Z$, we can choose a uniform $r=r(d,\ep,v,\Ii)\in \Nn$ satisfying the requirements for each $Z$.

Finally, because $(X, B) \in \Ss(d,\ep, v,\Ii)$ satisfies the conditions in \cite[Theorem 1.3]{Bir18} (see Theorem \ref{thm: Birkar CY fibration}), $\Ss(d,\ep, v,\Ii)$ is log bounded.
\end{proof}

Corollary \ref{cor: DCC of Iitaka volume} follows from Corollary \ref{cor: bounded of the whole family}.

\begin{proof}[Proof of Corollary \ref{cor: DCC of Iitaka volume}]
Just as in Step 1 in the proof of Theorem \ref{thm: main result}, we can replace $(X, B)$ by its good minimal model and assume that $f: X \to Z$ is a morphism defined by the semi-ample divisor $K_X+B$. Suppose that $\{\Ivol(K_{X_i}+B_i)\}_{i\in\Nn}$ is a strictly decreasing sequence satisfying properties of Corollary \ref{cor: DCC of Iitaka volume}. Let $v\in \Qq_{>0}$ such that $\Ivol(K_{X_i}+B_i)<v$ for each $i$. Then by Corollary \ref{cor: bounded of the whole family}, $(X_i, B_i), {i\in\Nn}$ belong to a bounded family $(\Xx, \Bb) \to \Tt$. 

By taking an irreducible component of $\Tt$ and the corresponding $(\Xx, \Bb)$, we can assume that $\Xx, \Tt$ are irreducible. Moreover, we can assume that the points which parametrize $(X_i, B_i)$ are dense in $\Tt$. Take a log resolution $\pi: \Xx' \to \Xx$ of $(\Xx, \Bb)$ and set $\Bb' = \pi_*^{-1}\Bb+\Exc(\pi)$. For $(X_i, B_i)$ which is parametrized by a general point $t_i\in \Tt$ (i.e. $X_i \simeq \Xx_{t_i}$), set $X_i' = \Xx'_{t_i}$ and $B_i'=(\pi|_{\Xx'_{t_i}})_*^{-1}(B_i)+(1-\ep)(\Exc(\pi)_{t_i})$. Then 
\[
K_{X_i'}+B_i'=(\pi|_{\Xx'_{t_i}})^*(K_{X_i}+B_i)+E_i'
\] with $E_i'\geq 0$ a $\pi|_{\Xx'_{t_i}}$-exceptional divisor. Thus $\Ivol(K_{X_i'}+B_i') = \Ivol(K_{X_i}+B_i)$. Moreover, $(X_i', B_i')$ is $\ep$-lc and belongs to a bounded family $(\Xx', \Bb') \to \Tt$. Replacing $(\Xx, \Bb), (X_i, B_i)$ by $(\Xx', \Bb'), (X_i', B_i')$ respectively, and stratifying $\Tt$ if necessary, we can assume that $(\Xx, \Bb)$ is log smooth over $\Tt$, and $\Tt$ is smooth. Passing to a finite cover of $\Tt$ (see \cite[Claim 4.38.1]{Kol13}), we can further assume that every stratum of $\Bb$ has irreducible fibers over $\Tt$.

For $(X_i, B_i)$, let $\Bb^i$ be a divisor on $\Xx$ whose components choose the same coefficients as those of $B_i$. By \cite[Theorem 4.2]{HMX18}, for fixed $m \in \Nn$ and any $t\in \Tt$, $h^0(\Xx_t, m(K_{\Xx_t}+(\Bb^i)_t))$ is invariant. Because $\Ii$ is a DCC set, we can assume that the coefficient of each component of $\{\Bb^i\}_{i\in\Nn}$ is non-decreasing by passing to a subsequence. Moreover, we can assume that $\kappa(K_{X_i}+B_i) \geq 0$ is the same for each $i\in \Nn$. Then, for any $t\in \Tt$, $\{\Ivol(K_{\Xx_t}+(\Bb^i)_t)\}_{i\in\Nn}$ is non-decreasing. However, when $i<j$,
\[
\Ivol(K_{X_i}+B_i)=\Ivol(K_{\Xx_t}+(\Bb^i)_t) \leq \Ivol(K_{\Xx_t}+(\Bb^j)_t) = \Ivol(K_{X_j}+B_j),
\] which is a contradiction to the strictly decreasing of $\{\Ivol(K_{X_i}+B_i)\}_{i\in\Nn}$. 
\end{proof}

\begin{remark}\label{eq: rmk: discreteness of Ivol}
By the same argument as above, if further assume that $\Ii$ is a finite set, then $\{\Ivol(K_X+B)\}$ is a discrete set. 
\end{remark}

\section{Further discussions}\label{sec: further discussions}

In this section, we explain the relation between Conjecture \ref{conj: bounded of base, klt} and the effective adjunction conjecture (see Conjecture \ref{conj: effective adjunction}). Along the way, some lower dimensional cases are established. We also discuss possible variants of Conjecture \ref{conj: bounded of base, klt}.

\begin{proposition}\label{prop: assume effective adjunction}
Assuming Conjecture \ref{conj: effective adjunction} and the existence of good minimal models, then Conjecture \ref{conj: bounded of base, klt} and Conjecture \ref{conj: DCC} hold.
\end{proposition}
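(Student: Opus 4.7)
The plan is to use the canonical bundle formula together with Conjecture \ref{conj: effective adjunction} to reduce both conjectures to the known boundedness and DCC theory for generalized polarized pairs of log general type. First, since good minimal models are assumed to exist, I would replace $(X, B)$ by a good minimal model; the Iitaka fibration $f : X \to Z$ then becomes a morphism, $Z$ is the ample model of $(X, B)$, and $K_X + B \sim_\Qq f^*L$ for some ample $\Qq$-Cartier divisor $L$ on $Z$ with $L^{\dim Z} = v$. The canonical bundle formula gives $L \sim_\Qq K_Z + B_Z + M_Z$; by ACC for log canonical thresholds \cite[Theorem 1.1]{HMX14} the coefficients of $B_Z$ lie in a DCC set $\Jj = \Jj(d, \Ii)$, and $M_Z$ is the trace of the moduli b-divisor $\mathbf{M}$.

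Second, I would apply Conjecture \ref{conj: effective adjunction} to control $\mathbf{M}$. On a general fiber $F$ of $f$, the pair $(F, B|_F)$ is klt with $K_F + B|_F \sim_\Qq 0$, so by \cite[Theorem D]{HMX14} the horizontal multiplicities of $B$ lie in a finite subset $\Ii_0 = \Ii_0(d, \Ii) \subset \Ii$. Conjecture \ref{conj: effective adjunction} then supplies $r = r(d, \Ii) \in \Nn$ and a birational model $\tau : \bar Z \to Z$ (which may be taken to be a log resolution of $(Z, B_Z)$) on which $r M_{\bar Z}$ is base point free Cartier and $\mathbf{M} = \overline{M_{\bar Z}}$. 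Setting $\bar B_Z = \tau^{-1}_{*} B_Z + \Exc(\tau)$ and $K_{\bar Z} + \bar B_Z + M_{\bar Z} = \tau^*(K_Z + B_Z + M_Z) + E$ with $E \geq 0$ exceptional, $(\bar Z, \bar B_Z + M_{\bar Z})$ becomes a generalized lc pair with coefficients of $\bar B_Z$ in the DCC set $\Jj \cup \{1\}$, with uniform Cartier index $r$ on the nef part, and with $K_{\bar Z} + \bar B_Z + M_{\bar Z}$ big of volume bounded in terms of $v$.

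Conjecture \ref{conj: bounded of base, klt} now reduces to boundedness of generalized polarized pairs of log general type with bounded volume, DCC coefficients, and uniform Cartier index on the moduli part; this is the generalized-pair analogue of \cite[Theorem 1.1]{HMX18} and can be proved from the effective birationality in \cite[Theorem 1.3]{BZ16} together with the Hacon--McKernan--Xu log-birational-boundedness argument (much as in Step 5 of the proof of Theorem \ref{thm: main result}), the bound being then descended from $\bar Z$ to the ample model $Z$ via the finiteness of ample models \cite[Corollary 1.1.5]{BCHM10}. Conjecture \ref{conj: DCC} follows immediately afterwards by rerunning the proof of Corollary \ref{cor: DCC of Iitaka volume} inside the bounded family just produced.

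The main obstacle is that Conjecture \ref{conj: effective adjunction} only makes $\mathbf{M}$ globally Cartier on the higher model $\bar Z$, not on $Z$ itself; the generalized-pair estimates thus live on $\bar Z$ and must be transported back to $Z$ through the ample-model construction. This is exactly the technical point noted after Corollary \ref{cor: bounded of the whole family} that obstructs a direct application of \cite[Theorem 1.8]{Fil18}, and is where the existence of a uniform Cartier index for $\mathbf{M}$---as opposed to merely its b-nefness and b-abundance---becomes indispensable.
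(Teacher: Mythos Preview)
Your overall strategy is sound, but the paper takes a shorter and more direct route that you overlook. The key difference is how the base-point-freeness in Conjecture~\ref{conj: effective adjunction} is used. You keep the moduli part as a nef b-divisor and work with the generalized lc pair $(\bar Z,\bar B_Z+M_{\bar Z})$ on a higher model, then appeal to a generalized-pair analogue of \cite[Theorem 1.1]{HMX18} that you only sketch, and finally descend to $Z$ via finiteness of ample models. The paper instead observes that since $m\mathbf M$ is a base point free b-divisor, one can choose a general effective $G_Z\sim_\Qq M_Z$ with coefficients in the finite set $\{k/m\mid 1\le k\le m-1\}$ such that $(Z,\Delta_Z+G_Z)$ is klt; now $K_Z+\Delta_Z+G_Z$ is an ample adjoint divisor on an \emph{ordinary} klt pair with DCC coefficients and $\vol(K_Z+\Delta_Z+G_Z)=\Ivol(K_X+B)$, so Conjecture~\ref{conj: bounded of base, klt} and Conjecture~\ref{conj: DCC} follow immediately from the already-established \cite[Theorem 1.1]{HMX18} and \cite[Theorem 1.3(1)]{HMX14}, with no need to pass to $\bar Z$, no generalized-pair boundedness statement, and no ample-model descent.

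What your approach buys is that it would still work under the weaker hypothesis that $m\mathbf M$ is merely b-Cartier rather than b-free; but under the full strength of Conjecture~\ref{conj: effective adjunction} the paper's trick of trading $M_Z$ for an effective $G_Z$ is both simpler and complete, whereas your argument leaves the generalized-pair analogue of \cite[Theorem 1.1]{HMX18} as an unproved (if plausible) black box.
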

\begin{proof}
By the existence of good minimal models, we can assume that $K_X+B$ is semi-ample with $f: X \to Z$ the morphism induced by $K_X+B$.  By the canonical bundle formula, $K_X+B\sim_\Qq f^*(K_Z+\Delta_Z+M_Z)$ where $\Delta_Z$ belongs to a DCC set which depends only on $d$ and $\Ii$. For a general fiber $F$, $K_F+B|_F \equiv 0$ and $B|_F \in \Ii$. Hence by global ACC \cite[Theorem D]{HMX14}, $B|_F$ belongs to a finite set $\Ii_0$ depending on $\dim F$ and $\Ii$. Applying Conjecture \ref{conj: effective adjunction}, there is an $m=m(d, \Ii_0) \in \Nn_{\geq 2}$ such that for the moduli b-divisor $\mathbf M$, $m\mathbf M$ is a base point free b-divisor. Hence there exists $0 \leq G_Z \simeq_{\Qq} M_Z$ with $G_Z \in \{\frac{k}{m} \mid 1 \leq k \leq {m-1}, k \in \Zz\}$ such that $(Z, \Delta_Z+G_Z)$ is klt. Notice that $K_Z+\Delta_Z+G_Z$ is ample with $\vol(K_Z+\Delta_Z+G_Z) = \Ivol(K_X+B)$, and the coefficients of $\Delta_Z+G_Z$ belong to a DCC set depending only on $d$ and $\Ii$. Then Conjecture \ref{conj: bounded of base, klt}  follows from \cite[Theorem 1.1]{HMX18} and Conjecture \ref{conj: DCC} follows from \cite[Theorem 1.3 (1)]{HMX14}.
\end{proof}

\begin{lemma}[{\cite[Theorem 1.3]{TX09}}]\label{le: TX09}
Let $f: (X, B) \to Z$ be a klt-trivial fibration. When the relative dimension of $f$ is two, there is a natural number $m$ depending only on the coefficients of the horizontal part of $B$, such that for the moduli b-divisor $\mathbf M$, $m\mathbf M$ is b-Cartier.
\end{lemma}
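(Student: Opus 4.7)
The plan is to follow the Prokhorov--Shokurov strategy used for relative dimension one, suitably upgraded to surface fibers. The starting point is that the generic fiber $(F, B^h|_F)$ is a klt log Calabi--Yau surface pair with $K_F + B^h|_F \sim_{\Qq} 0$. Since the coefficients of $B^h|_F$ are among those of $B^h$, global ACC (as in \cite[Theorem D]{HMX14}) forces the coefficients of $B^h|_F$ into a finite set, and the classification of klt log Calabi--Yau surface pairs then restricts $(F, B^h|_F)$ to finitely many ``types'' (e.g.\ abelian, K3-like, Enriques-like, or pairs with a canonical elliptic/ruled structure). All of this finite combinatorial data depends only on the coefficients of $B^h$.

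First, I would pass to a birational model $Z' \to Z$ and a resolution $X' \to X$ so that $\mathbf{M}$ is represented by a nef $\Qq$-Cartier divisor $M_{Z'}$ on $Z'$ and so that the new pair is log smooth over the generic point of $Z'$. This uses the general theory of the canonical bundle formula recalled in \S\ref{subsec: canonical bundle formula} (Ambro, Fujino--Gongyo). The claim $m\mathbf M$ is b-Cartier then becomes the claim that $mM_{Z'}$ is Cartier on every such sufficiently high model, for some $m$ depending only on the type of $(F, B^h|_F)$.

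Second, after a finite cover $\tilde Z \to Z'$ whose degree is bounded in terms of the fiber type (killing monodromy on $R^2 f_* \Qq$ and, when needed, on the torsion of the N\'eron--Severi group of $F$), one applies semistable reduction in relative dimension two to replace $X'$ by a family with at worst semi-log-canonical, normal-crossings degenerations. On this new base, the moduli part is computed from a variation of Hodge structure on the generic fiber; for each of the finitely many fiber types the Hodge bundle has a Cartier index $m$ determined by the polarization type, the discriminant, and the monodromy group, all of which depend only on the coefficients of $B^h$. Pushing back down by the finite cover multiplies the Cartier index by a bounded factor, yielding the uniform $m$.

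The main obstacle is the last step: one must show that the denominators introduced by (i) the base change controlling monodromy and (ii) the variation of Hodge structure on the fiber type are simultaneously bounded by the horizontal coefficients alone, independently of the total space $X$ and the base $Z$. This is precisely where the finiteness of the surface classification and the boundedness of log Calabi--Yau surfaces with fixed coefficients are essential, and it is what makes the surface case substantially harder than the curve case treated in \cite[Theorem 8.1]{PS09}.
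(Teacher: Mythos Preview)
The paper does not prove this lemma at all: it is quoted verbatim as \cite[Theorem 1.3]{TX09}, and the only addition is the Remark immediately following, which observes that in the proof of \cite[Theorem 1.3]{TX09} the integer $m$ is produced from the data of the general fiber $(F, B|_F)$ alone, hence depends only on the horizontal coefficients. So there is nothing in the paper to compare your argument to beyond that one-sentence remark.

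Your outline is in the right spirit but misidentifies the key mechanism. The actual argument in \cite{TX09} does not go through semistable reduction in relative dimension two (which is a hard theorem of Abramovich--Karu and is not what controls the denominators here). Instead it rests on the Fujino--Mori result \cite{FM00}: for a klt-trivial fibration the Cartier index of the moduli b-divisor is bounded by a quantity depending only on the middle Betti number of a suitable cyclic cover of a smooth model of the general fiber (the cover that trivializes the torsion of $K_F+B|_F$). For surface fibers, one then shows that these covers lie in a bounded family once the coefficients of $B^h$ are fixed---this uses global ACC to pin down the index of $K_F+B|_F$ and Alexeev-type boundedness of log Calabi--Yau surfaces---so the relevant Betti number, and hence $m$, is uniformly bounded. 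Your Steps 1 and 3 (finiteness of fiber types via global ACC and boundedness) are correct and are exactly what feeds into this, but your Step 2 invoking semistable reduction and ``Hodge bundle Cartier index determined by polarization type and monodromy group'' is where the sketch becomes vague and diverges from what is actually done: you should replace that step with the Fujino--Mori Betti-number bound, which is both sharper and avoids the need for semistable reduction altogether.
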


\begin{remark}
The dependence only on the coefficients of the horizontal part of $B$ is not explicitly stated in \cite[Theorem 1.3]{TX09}, but it is easily seen from the proof of \cite[Theorem 1.3]{TX09} where $m$ depends only on $(F, B|_F)$ with $F$ a general fiber of $f$.
\end{remark}

\begin{corollary}\label{cor: dim 3}
Conjecture \ref{conj: bounded of base, klt} and Conjecture \ref{conj: DCC} hold when $d\leq 3$.
\end{corollary}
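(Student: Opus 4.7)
The plan is to apply Proposition \ref{prop: assume effective adjunction}, which reduces both conjectures to verifying two ingredients: (a) the existence of good minimal models for klt pairs of dimension at most $3$, and (b) Conjecture \ref{conj: effective adjunction} for the Iitaka fibrations arising in this range. Ingredient (a) is a classical consequence of the $3$-dimensional minimal model program and the abundance theorem for klt $3$-folds. So after replacing $(X,B)$ by a good minimal model, I may assume that $f:X\to Z$ is a morphism and $K_X+B$ is semi-ample.

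Next I split by the relative dimension $r=\dim X-\dim Z$ of $f$, where $0\le r\le d\le 3$. The extreme cases are immediate: when $r=d$ the base $Z$ is a point (hence trivially bounded, with $\Ivol(K_X+B)\in\{0,1\}$), and when $r=0$ the divisor $K_X+B$ is big, so Conjecture \ref{conj: bounded of base, klt} follows directly from \cite[Theorem 1.1]{HMX18} and Conjecture \ref{conj: DCC} from \cite[Theorem 1.3 (1)]{HMX14}. Thus I only need to establish effective adjunction in the cases $r\in\{1,2\}$. For a general fiber $(F,B|_F)$ of $f$, one has $K_F+B|_F\equiv 0$ with $\dim F\le 2$ and $B|_F\in\Ii$, so by global ACC (\cite[Theorem D]{HMX14}) the coefficients of $B|_F$ (equivalently, of the horizontal part $B^h$) belong to a finite set $\Ii_0=\Ii_0(d,\Ii)$.

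When $r=1$, the effective adjunction conjecture is the known result \cite[Theorem 8.1]{PS09}. When $r=2$, necessarily $d=3$ and $\dim Z=1$; in this case Lemma \ref{le: TX09} supplies $m=m(\Ii_0)\in\Nn$ with $m\mathbf M$ b-Cartier. Since $Z$ is a smooth projective curve and $\mathbf M$ is b-nef and b-abundant, passing to a birational model of $Z$ where the trace of $\mathbf M$ is nef and Cartier, and possibly enlarging $m$ by a bounded factor, gives the base-point-freeness required by Conjecture \ref{conj: effective adjunction}. With (a) and (b) in hand, Proposition \ref{prop: assume effective adjunction} produces a klt pair $(Z,\Delta_Z+G_Z)$ with $K_Z+\Delta_Z+G_Z$ ample, $\vol(K_Z+\Delta_Z+G_Z)=\Ivol(K_X+B)$, and coefficients in a DCC set depending only on $d$ and $\Ii$; the two conclusions then follow from \cite[Theorem 1.1]{HMX18} and \cite[Theorem 1.3 (1)]{HMX14} respectively.

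The main obstacle is the relative-dimension-$2$ case, where Lemma \ref{le: TX09} yields only b-Cartierness of $m\mathbf M$ rather than the b-base-point-freeness demanded by Conjecture \ref{conj: effective adjunction}; the upgrade is made possible by the fact that the base is one-dimensional, so that a nef Cartier divisor on a suitable birational model of $Z$ automatically becomes semi-ample with bounded index, allowing one to choose the auxiliary divisor $G_Z$ in the required finite coefficient set.
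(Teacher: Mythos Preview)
Your reduction to Proposition \ref{prop: assume effective adjunction} works cleanly when $r=1$, but the $r=2$ step has a real gap. Lemma \ref{le: TX09} gives only that $m\mathbf M$ is b-Cartier; your claim that on a one-dimensional base this can be upgraded to b-base-point-freeness ``after enlarging $m$ by a bounded factor'' is not justified and in fact fails in general. On a smooth projective curve $Z$, a nef integral divisor $D$ need not be base-point-free with index bounded independently of $Z$: if $\deg D=0$ then $D$ may be a torsion point of arbitrarily large order in $\mathrm{Pic}^0(Z)$, and if $\deg D>0$ one needs degree at least $2g(Z)$ for base-point-freeness, while $g(Z)$ is not a priori bounded (in Conjecture \ref{conj: DCC} there is no upper bound on $\Ivol$, hence none on the genus). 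The b-nef b-abundant property of $\mathbf M$ only gives $M_Z\sim_\Qq h^*H$ without control on the index of this $\Qq$-equivalence, so it does not rescue the argument. Consequently you cannot produce the auxiliary $G_Z$ with coefficients in a fixed finite set, and Proposition \ref{prop: assume effective adjunction} does not apply.

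The paper's proof avoids this by treating the case $\dim Z=1$ directly rather than through effective adjunction. For Conjecture \ref{conj: bounded of base, klt} one simply observes $\deg_Z K_Z\le \deg_Z(K_Z+\Delta_Z+M_Z)=v$, which bounds the genus and hence $Z$. For Conjecture \ref{conj: DCC} the b-Cartierness from Lemma \ref{le: TX09} already suffices: it forces $\deg_Z M_Z\in\frac{1}{m}\Zz_{\ge 0}$, while $\deg_Z K_Z\in\Zz$ and $\deg_Z\Delta_Z$ lies in a DCC set by ACC for log canonical thresholds, so their sum $\Ivol(K_X+B)=\deg_Z(K_Z+\Delta_Z+M_Z)$ is DCC. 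No base-point-freeness of $M_Z$ is needed.
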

\begin{proof}
We only show the case $d=3$ as the easier case $d<3$ can be obtained by the same argument. When $d=3$, we only need to consider the cases when $\dim Z =1, 2$. 

If $\dim Z =2$, then the relative dimension is $1$, and Conjecture \ref{conj: effective adjunction} holds by \cite[Theorem 8.1]{PS09}. Hence the claim follows from Proposition \ref{prop: assume effective adjunction}. 

If $\dim Z=1$, then adopting the same notation as in the proof of Proposition \ref{prop: assume effective adjunction}, we have $\deg_Z K_Z \leq \deg_Z (K_Z+\Delta_Z+M_Z) =v$. Hence $Z$ belongs to a bounded family. For Conjecture \ref{conj: DCC}, by global ACC \cite[Theorem D]{HMX14}, $B|_F$ belongs to a finite set depending on $\dim F$ and $\Ii$, then by Lemma \ref{le: TX09}, there exists $m \in \Nn$ such that  $mM_Z$ is nef and Cartier. Thus $\deg_Z M_Z \in \frac{1}{m}\Zz_{\geq 0}$. Because $\Delta_Z$ belongs to a DCC set, $\Ivol(K_X+B)=\deg_Z(K_Z+\Delta_Z+M_Z)$ belongs to a DCC set.
\end{proof}

It is well known that in Conjecture \ref{conj: bounded of base, klt}, condition (2) cannot be replaced by $\Ivol(K_X+B) \leq v$. In fact, even when $K_X+\Delta$ is ample, we have the following unbounded family of varieties.

\begin{example}[{cf. \cite[Example 2.15]{Ale94}}]\label{eg: unbounded}
Let $X$ be a smooth surface and $\Delta =A+B$ be an snc divisor with reduced irreducible components $A$ and $B$ such that $A \cap B \neq \emptyset$. Suppose that $K_X+\Delta$ is ample. First, choose $0 < a_0 <1$ close to $1$, and then blow up an intersection point of $A\cap B$, we have a crepant model
$(X_1, a_0(A_1+B_1)+(2a_0-1)E)$, where $A_1, B_1$ are strict transforms of $A, B$. Choose $0<\epsilon_1 \ll 1$, then $K_{X_1} +\Delta_1\coloneqq K_{X_1}+a_0(A_1+B_1)+(2a_0-1-\epsilon_1)E)$ is ample. The coefficients of $\Delta_1$ are in $\{a_0, 2a_0-1-\epsilon\}$. Notice that $\vol(K_{X_1}+a_0(A_1+B_1)+(2a_0-1-\epsilon_1)E) \leq \vol(K_X+\Delta)$. Next, choose $a_1<1$ even more close to $1$. First blow up a point of $A\cap B$, then blow up a point of $A_1 \cap E$, we get a crepant model
\[
(X_2, a_1(A_2+B_2)+(2a_1-1)E+(3a_1-2)F). 
\] Take $0<\epsilon_2, \epsilon_2' \ll 1$ such that
\[
K_{X_2} +\Delta_2\coloneqq K_{X_2}+a_1(A_2+B_2)+(2a_1-1-\epsilon_2)E+(3a_1-2-\epsilon_2')F
\] is ample. Its volume is still bounded above by $\vol(K_X+\Delta)$. Besides,  $a_1, \epsilon_2, \epsilon_2'$ can be chosen such that $\min\{a_1, 2a_1-1-\epsilon_2, 3a_1-2-\epsilon_2'\} > \max\{a_0, 2a_0-1-\epsilon_1\}$. We can continue this process to construct klt log pairs $(X_i, \Delta_i), i \in \Nn$. The coefficients of $\Delta_i$ are in a DCC set and $\vol(K_{X_i}+\Delta_i) \leq \vol(K_X+\Delta)$, but $\{X_i\}$ is unbounded because $\rho(X_i)=\rho(X)+i$. 
\end{example}

However, under the $\ep$-lc assumption on the total space $(X, B)$, we propose the following conjecture.

\begin{conjecture}\label{conj: bounded of base, epsilon-lc}
Let $d\in\Nn, \epsilon, v \in \Qq_{>0}$ be fixed numbers, and $\Ii\subset [0,1] \cap \Qq$ be a DCC set. Let $\mathfrak F(d,\epsilon, v, \Ii)$ be the set of varieties $Z$ satisfying the following properties:
\begin{enumerate}
\item $(X, B)$ is $\ep$-lc with $\dim X =d$, and coefficients of $B$ are in $\Ii$,
\item $0<\Ivol(K_X+B) < v$, and
\item $f: X \dasharrow Z$ is the Iitaka fibration associated with $K_X+B$, where $$Z=\proj \oplus_{m=0}^\infty H^0(X, \Oo_X(\lfloor m(K_X+B) \rfloor)).$$
\end{enumerate}
Then $\mathfrak F(d,\epsilon, v, \Ii)$ is a bounded family.
\end{conjecture}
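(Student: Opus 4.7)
The strategy is to adapt the proof of Theorem \ref{thm: main result}, pinpointing where the hypothesis that $B$ is big over $\eta_Z$ is used and finding substitutes that come only from the $\ep$-lc assumption. First I would reduce to the case where $f\colon X\to Z$ is a morphism and $K_X+B$ is semi-ample by replacing $(X,B)$ with a good minimal model, mirroring Step~1 of the proof of Theorem \ref{thm: main result}; this requires existence of good minimal models for $\ep$-lc pairs with $K_X+B$ pseudo-effective (known in low dimensions, a standard conjecture in general). After this reduction the canonical bundle formula \eqref{eq: canonical bundle formula} gives $K_X+B\sim_\Qq f^*(K_Z+\Delta_Z+M_Z)$, and the hypothesis $\Ivol(K_X+B)<v$ becomes $\vol(K_Z+\Delta_Z+M_Z)<v$. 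By ACC for log canonical thresholds \cite[Theorem 1.1]{HMX14} the coefficients of $\Delta_Z$ lie in a DCC set depending only on $d$ and $\Ii$.

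A general fiber $(F,B|_F)$ is now an $\ep$-lc log Calabi--Yau pair with $B|_F\in\Ii$. In the proof of Theorem \ref{thm: main result} the hypothesis that $B$ be big over $\eta_Z$ was used precisely to invoke the BAB theorem \cite[Theorem 1.1]{Bir16BAB} and conclude birational boundedness of $F$; in the present setting the natural replacement is Birkar's conjectural boundedness of $\ep$-lc Calabi--Yau pairs in dimension $d-\dim Z$. Granting that input and combining it with the results of \cite{FM00,Tod10} (together with their refinements \cite{PS09,FG14}), one obtains a uniform index $r=r(d,\ep,\Ii)\in\Nn$ such that on a suitable log resolution $\bar Z\to Z$ the trace $rM_{\bar Z}$ is a nef Cartier divisor. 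Thus $(\bar Z,\bar\Delta_Z+M_{\bar Z})$ is a generalized lc pair with DCC divisorial part, bounded moduli index, and bounded generalized volume.

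At this point the argument runs in parallel with Step~2 of the proof of Theorem \ref{thm: main result}. The birationality theorem for generalized pairs \cite[Theorem 1.3]{BZ16} yields a uniform $m=m(d,\ep,\Ii)$ such that $|m(K_Z+\Delta_Z+M_Z)|$ defines a birational map $\mu\colon Z\dashrightarrow Z'$, and the volume bound forces $Z'\subset\Pp^N$ to have bounded degree, hence $Z'$ lies in a bounded family. To promote birational boundedness of $Z$ to boundedness, choose a general $G_Z\in|rM_Z|$ with $\tfrac{1}{r}G_Z\sim_\Qq M_Z$ so that $(Z,\Delta_Z+\tfrac{1}{r}G_Z)$ is klt with coefficients in a DCC set depending only on $d,\ep,\Ii$ and $\vol(K_Z+\Delta_Z+\tfrac{1}{r}G_Z)=\Ivol(K_X+B)<v$; then \cite[Theorem 1.1]{HMX18} (or \cite[Theorem 1.6]{HMX14}) implies that $Z$ itself is bounded. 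This is essentially the endgame of Proposition \ref{prop: assume effective adjunction}.

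The main obstacle is the production of the bounded moduli index $r$, which in turn reduces to two deep statements: boundedness of $\ep$-lc log Calabi--Yau pairs of a fixed dimension with DCC coefficients (Birkar's conjecture), and the effective adjunction Conjecture \ref{conj: effective adjunction}. Both are known when $\dim F\leq 2$ by \cite[Theorem 8.1]{PS09} and Lemma \ref{le: TX09}, which is precisely why the unconditional result Corollary \ref{cor: dim 3} stops at $d\leq 3$, and why Theorem \ref{thm: main result} needs $B$ big over $\eta_Z$ to bypass these inputs via BAB. In summary, the proposal establishes Conjecture \ref{conj: bounded of base, epsilon-lc} conditionally on these two conjectures, and unconditionally in the cases where they are already available.
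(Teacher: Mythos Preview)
The statement you are attempting is Conjecture \ref{conj: bounded of base, epsilon-lc}, which the paper leaves \emph{open}; there is no proof in the paper to compare against. What the paper does prove is the special case Theorem \ref{thm: main result} (with the extra hypothesis that $B$ is big over $\eta_Z$), and, in Proposition \ref{prop: assume effective adjunction}, that the \emph{fixed-volume} Conjecture \ref{conj: bounded of base, klt} follows from effective adjunction and good minimal models. Your proposal is essentially the argument of Proposition \ref{prop: assume effective adjunction}, but applied to the wrong target.

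The concrete gap is in your last step. After producing $(Z,\Delta_Z+\tfrac{1}{r}G_Z)$ klt with coefficients in a DCC set and $\vol(K_Z+\Delta_Z+\tfrac{1}{r}G_Z)<v$, you invoke \cite[Theorem 1.1]{HMX18} or \cite[Theorem 1.6]{HMX14} to bound $Z$. But \cite[Theorem 1.1]{HMX18} needs the volume to be \emph{fixed}, while \cite[Theorem 1.6]{HMX14} needs the pair to be $\delta$-lc for some uniform $\delta>0$. Neither holds here: the Iitaka volume in Conjecture \ref{conj: bounded of base, epsilon-lc} is only bounded above, and the paper explicitly warns (just after stating the conjecture) that $(Z,\Delta_Z)$ ``may not be $\delta$-lc for any $\delta>0$''. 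Example \ref{eg: unbounded} shows that klt plus DCC coefficients plus bounded volume does \emph{not} give boundedness, so this is a genuine obstruction, not a technicality.

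This is not hypothetical: take $\dim X=3$, $\dim Z=2$, $B^h=0$. The general fiber is an elliptic curve, so both of your conjectural inputs (boundedness of $\ep$-lc Calabi--Yau fibers and effective adjunction) are classical theorems here. Yet the paper states that it is unknown whether Conjecture \ref{conj: bounded of base, epsilon-lc} holds in this very case. Your scheme, if it worked, would settle it; hence it cannot work as written. What your argument actually yields, once the endgame is corrected to use fixed volume, is a conditional proof of Conjecture \ref{conj: bounded of base, klt}, i.e.\ exactly Proposition \ref{prop: assume effective adjunction}. To attack Conjecture \ref{conj: bounded of base, epsilon-lc} along these lines you would need a mechanism to transfer the $\ep$-lc condition from $(X,B)$ to a uniform $\delta$-lc condition on $(Z,\Delta_Z+G_Z)$; in the paper this is supplied by \cite[Theorem 1.9]{Bir18} precisely when $-K_X$ is big over $Z$, which is why Theorem \ref{thm: main result} carries that hypothesis.
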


When $K_X+B$ is big, that is, $\Ivol(K_X+B)=\vol(K_X+B)>0$, Conjecture \ref{conj: bounded of base, epsilon-lc} follows from \cite[Theorem 1.3, Theorem 1.6]{HMX14}. Unfortunately, I do not know whether the conjecture holds when $\dim X=3, \dim Z =2$ and $B^h=0$. In this case, $(Z, B_Z)$ may not be $\delta$-lc for any $\delta>0$. Hence, a priori, $Z$ may not be in a bounded family (see Example \ref{eg: unbounded}). Our main result (Theorem \ref{thm: main result}) is about Conjecture \ref{conj: bounded of base, epsilon-lc} under the assumption that $-K_X$ is big over $Z$. Under this extra condition, $(Z, B_Z)$ is $\delta$-lc for some $\delta = \delta(\ep,d,\Ii)>0$ (see \cite[Theorem 1.9]{Bir18}).

\begin{remark}
As mentioned above, it is desirable to obtain the boundedness of the bases regardless of the boundedness of fibers. From this perspective, we can even ask whether $Z$ belongs to a bounded family under the assumption that $\dim Z$ is fixed in Conjecture \ref{conj: bounded of base, klt} and Conjecture \ref{conj: bounded of base, epsilon-lc} (i.e. $\dim X$ can be arbitrarily large).
\end{remark}

Besides, in Conjecture \ref{conj: bounded of base, klt} and Conjecture \ref{conj: bounded of base, epsilon-lc}, one can consider $\Rr$-divisors instead of $\Qq$-divisors. In this scenario, $\proj \oplus_{m=0}^\infty H^0(X, \Oo_X(\lfloor m(K_X+B) \rfloor))$ may not make sense. Instead, $Z$ should be replaced by the ample model of $(X, B)$ whose existence is known under certain conditions (see \cite{Jia22, Li22}). For Conjecture \ref{conj: DCC}, one can also require that $(X, B)$ to have lc singularities with real coefficients. See \cite[Section 6]{Li20} for the precise statement of the conjectures and their relation to the $\Gamma$-effective adjunction conjecture
for lc-trivial fibrations.

\bibliographystyle{alpha}

\bibliography{bibfile}

\end{document}